\documentclass[11pt,a4paper]{article}
\usepackage[utf8]{inputenc}
\usepackage[T1]{fontenc}
\usepackage{lmodern}
\usepackage[margin=2.35cm]{geometry}
\usepackage{microtype}
\usepackage{amsmath,amssymb,mathtools,amsthm}
\usepackage{graphicx}
\usepackage{booktabs}
\usepackage{array}
\usepackage{url}
\usepackage{authblk}
\usepackage{xcolor}
\usepackage{tikz}
\usepackage{enumitem}
\usepackage{placeins}
\usepackage[hidelinks]{hyperref}
\usetikzlibrary{arrows.meta,positioning,shapes.geometric,calc,fit}
\graphicspath{{figures/}}

\newcommand{\E}{\mathbb{E}}
\newcommand{\dif}{\,\mathrm{d}}
\newcommand{\MS}{\mathrm{MS}}
\newcommand{\MD}{\mathrm{MD}}

\newtheorem{proposition}{Proposition}
\newtheorem{theorem}{Theorem}

\newcounter{algctr}
\newenvironment{algorithmblock}[1]{%
  \refstepcounter{algctr}\begin{figure}[t]\small\hrule\vspace{3pt}%
  \noindent\textbf{Algorithm \thealgctr: #1}\par\vspace{2pt}%
}{\vspace{2pt}\hrule\end{figure}}

\title{{\bf Stochastic inflows and seasonal water value at Manantali}}

\author[1,2]{Steeven B. Affognon\thanks{Corresponding author: \texttt{belvinos@gmail.com}}}
\author[1]{Babacar M. Ndiaye}
\author[1]{Pierre Mendy}
\author[3,4]{Cheikh M. F. Kebe}
\author[5]{Sandrine Charousset}
\author[6]{Omar Goudiaby}
\author[7]{Thomas Ouillon}

\affil[1]{Laboratory Mathematics of Decision and Numerical Analysis, Cheikh Anta Diop University of Dakar, Dakar, Senegal}
\affil[2]{Department of Mathematics, University of Nairobi, Nairobi, Kenya}
\affil[3]{Laboratoire Eau, Energie, Environnement et proc\'ed\'es Industriels (LE3PI), Universit\'e Cheikh Anta Diop de Dakar (UCAD), Dakar, Senegal}
\affil[4]{Centre de Tests des Syst\`emes Solaires (CT2S), Dakar, Senegal}
\affil[5,7]{EDF Lab Paris-Saclay, Département Optimisation Simulation Risques
Statistiques pour les marchés de l'énergie (OSIRIS), Palaiseau, France}
\affil[6]{Laboratoire Dynamique des Territoires et Développement (Leïdi), Université Gaston Berger (UGB), BP 2, Saint-Louis, Senegal}

\date{}

\begin{document}
\maketitle

\begin{abstract}
Seasonal hydropower operation requires release decisions that account for uncertain
future inflows and the opportunity cost of stored water. We assess the Modelling
and Optimisation of Stochastic Seasonal Hydropower Storage (MOSSHOOS) framework
for the Manantali reservoir using 660 monthly energy-inflow observations from
1961--2015. The framework combines multiscale inflow representation, blocked
resampling and scenario reduction with stochastic dual dynamic programming, and
uses a seasonal square-root diffusion and Hamilton--Jacobi--Bellman formulation
as a continuous-time interpretation of marginal water value. Scenario reduction
improves the energy score by 47.6\% relative to independent monthly sampling,
but the marginal distribution is rejected by a Kolmogorov--Smirnov test and the
lower-tail p10 error remains 103.9\%, indicating inadequate dry-tail calibration.
Consistent with that limitation, 50 paired Monte Carlo evaluations show no
detectable cost or deficit advantage of the stochastic policy over its
deterministic-equivalent counterpart at the archived operating configuration.
As an independent benchmark, the repaired open-source \texttt{plan4res} seasonal
storage valuation chain yields a dominant marginal value of 81.66~USD/MWh,
within 3\% of the MOSSHOOS mean of 79.25~USD/MWh. These results support a
coherent stochastic inflow-to-water-value methodology while showing that
hydrological tail representation, rather than economic scaling, is the principal
limitation to address before operational application.
\end{abstract}

\noindent\textbf{Keywords:} hydropower; stochastic inflow modelling; seasonal storage valuation; stochastic dual dynamic programming; Hamilton--Jacobi--Bellman equation; Senegal River.

\section{Introduction}\label{sec:intro}
Hydropower storage is an intertemporal allocation problem under uncertainty. A
reservoir operator must decide how much water to release now, when generation has
an immediate economic value, and how much to retain for future periods in which
inflow may be lower, demand may be higher, or alternative generation may be more
expensive. The shadow price associated with that trade-off is the \emph{water
value}: the marginal reduction in expected future operating cost obtained from an
additional unit of stored water \cite{labadie2004,gjelsvik2010,dequeiroz2016}. Water value is
therefore not a fixed tariff and not a purely hydraulic quantity. It is an
endogenous stochastic opportunity cost shaped by the inflow law, storage and
release constraints, demand, generation costs, terminal conditions, and the
information available when a decision is taken.

That dependence on the inflow law is especially important for seasonal reservoirs.
A model can reproduce the annual mean hydrograph and still be inadequate for
operation if it misses persistence of dry spells, the timing of the wet-season
transition, serial dependence between months, or the lower and upper tails that
drive shortage and spill. Scenario quality must consequently be judged by more
than a visual fit to the seasonal mean. In stochastic programming, reduced
scenario sets should preserve features that are relevant to the objective and to
future recourse decisions \cite{heitsch2003,kaut2007}; proper multivariate scores
provide one summary of distributional quality \cite{gneiting2007}, while marginal,
autocorrelation, and tail diagnostics reveal failure modes that a single score can
hide. Block resampling is attractive when temporal dependence must be retained
without imposing a fully parametric residual law \cite{politis1994}.

The Senegal River setting makes this multiscale issue concrete. Wavelet methods
are well established for separating nonstationary hydrological variability across
temporal scales \cite{labat2005,sang2013}. The basin itself has undergone marked
hydroclimatic shifts, while Manantali operation couples hydropower production to
flood and downstream water-management objectives \cite{bodian2020,bruckmann2022,raso2020}.
The hydrological record used in the MOSSHOOS project exhibits a pronounced annual
cycle together with substantial interannual variability. Our companion analysis
of daily inflows shows that annual and slower hydrological scales are statistically
distinguishable and should not be conflated in a single notion of ``seasonality''
\cite{affognon2026wavelet}. For the monthly Manantali archive considered here,
this motivates a deliberately transparent construction: a coarse Haar separation,
a Fourier seasonal target, blocked residual resampling, and reduction to a finite
weighted scenario law. Importantly, the present paper does not assume that this
construction is adequate merely because it preserves the seasonal peak. Its lower
tail, serial dependence, and out-of-sample control consequences are tested
explicitly.

A second representation of inflow is useful for a different reason. Rather than
replacing the finite scenario law used by stochastic programming, a continuous
stochastic differential equation provides a low-dimensional state description on
which the reservoir control problem can be characterized analytically. We use a
seasonal square-root diffusion of Cox--Ingersoll--Ross type, whose mean-reverting
structure is compatible with a time-varying seasonal target and whose diffusion
coefficient is state dependent \cite{cir1985}. The classical Feller boundary
criterion \cite{feller1951} then becomes operationally relevant: if the seasonal
ratio drops below its strict-positivity threshold during the dry part of the year,
the zero-inflow boundary cannot simply be ignored by the numerical HJB solver.
This distinction between nonnegativity and uniform strict positivity is important
for interpreting the continuous model and for selecting a positivity-preserving
numerical treatment \cite{alfonsi2005,lord2010}.

The storage decision itself can be described in two complementary ways. In
discrete time, stochastic dual dynamic programming (SDDP) approximates the convex
future-cost function by supporting hyperplanes generated from stage-problem dual
information \cite{pereira1991,philpott2008,shapiro2011,fullner2025}. Risk-aware
extensions have been developed specifically for multistage hydrothermal operation
\cite{philpottmatos2012}, and modern open implementations have made SDDP algorithms
more transparent and reproducible \cite{dowson2021}. This construction is well
suited to multistage hydrothermal problems because it propagates the future value
of storage without enumerating a full scenario tree. In continuous time, dynamic
programming leads to a Hamilton--Jacobi--Bellman (HJB) equation whose storage
derivative is the local marginal value of stored water
\cite{fleming2006,kushner2001,crandall1992,soner1986}. These are not the same numerical algorithm, and
there is no reason for their stage-by-stage trajectories to coincide when time
resolution, information structure, scenario aggregation, or terminal values
differ. They do, however, encode the same economic comparison: release should
continue while the immediate marginal system benefit of water exceeds its
continuation value. Our recent InFlow preprint develops this SDDP--HJB
cross-certification principle in a normalized risk-aware benchmark
\cite{affognon2026inflow}; here we use the idea more narrowly to interpret the
archived Manantali results.

This paper therefore focuses on the entire evidence chain from stochastic inflow
representation to marginal storage value. That chain is often where apparently
successful hydro-optimization studies become difficult to audit: a scenario
generator may be assessed separately from the optimizer, the optimizer may be
reported without direct interpretation of its dual variables, and a continuous
control model may be presented without a like-for-like economic benchmark. Our
objective is not to introduce yet another optimizer in isolation. It is to ask
whether the archived hydrology generates a credible stochastic law, how that law
enters discrete and continuous control formulations, whether the resulting water
values have a consistent economic interpretation, and which conclusions survive
when raw run-level evidence is distinguished from report-only summaries.

The open-source \texttt{plan4res} framework provides an independent benchmark for
that purpose. Its Seasonal Storage Valuation (SSV) layer exchanges Bellman
functions and water values with simulation and capacity-expansion modules
\cite{beulertz2019}. In the tested open-source configuration, the SSV path failed
before the first backward pass because the required LP relaxation was not activated
in the SMS++ interface. The missing dual is exactly the sensitivity needed for a
Benders cut, so repairing that path is mathematically relevant to the benchmark.
Nevertheless, the software repair is intentionally not the scientific center of
the paper. The main object is the MOSSHOOS stochastic inflow-to-water-value
framework; \texttt{plan4res} is used to test whether an independently computed
marginal-value scale is economically coherent.

The Manantali application is deliberately limited to a single-reservoir benchmark.
An operational representation of the wider OMVS system would have to account for
hydraulic coupling among Manantali, Gouina and F\'elou, downstream regulation and
multi-purpose constraints associated with Diama, travel times, head-dependent
conversion, environmental releases, and turbine limits. Those processes are not
implicitly absorbed into the present parameters. Likewise, the current economic
benchmark values water against a reference generation fleet rather than a joint
stochastic residual-load model with high solar and wind penetration. These
limitations define extensions of the framework rather than hidden assumptions of
the present results.

Within that scope, the study makes five linked contributions. First, it makes the
stochastic inflow layer explicit by separating observed seasonality, blocked
scenario construction, scenario reduction, and a continuous seasonal square-root
diffusion. Second, it evaluates the scenario law using joint, marginal,
autocorrelation, and tail diagnostics and carries those diagnostics into the
interpretation of policy results. Third, it derives in detail why an active SDDP
cut slope and the negative HJB storage derivative represent the same marginal
water-value concept, and it obtains the corresponding release threshold from the
HJB Hamiltonian. Fourth, it uses the repaired \texttt{plan4res} SSV chain as an
independent benchmark without imposing a false temporal alignment between its 18
stages and the 12 MOSSHOOS months. Fifth, it reanalyzes paired Monte Carlo and
storage-sensitivity evidence so that the principal run-level results are distinguished
from the additional high-storage sensitivity reported in the archived project summary. The resulting emphasis is intentional:
stochastic hydrology and seasonal water valuation are the scientific core;
software repair is supporting evidence for independent verification.

\section{Stochastic valuation framework}\label{sec:methods}

\subsection{Data, scope, and evidence hierarchy}
The hydrological archive contains 660 monthly Manantali-equivalent energy inflows
from 1961--2015. The series has mean 102.63~GWh/month, standard deviation
139.7~GWh/month, coefficient of variation 1.36, and observed range
0--785.98~GWh/month. The analysis is single-reservoir: it does not yet represent
travel time, head variation, environmental and multi-purpose releases, or the
full Manantali--Gouina--F\'elou--Diama system.

Numerical claims follow an evidence hierarchy in which raw paired-run tables and
machine-readable result files take precedence over rounded report summaries.
Table~\ref{tab:evidence} lists the scientific evidence objects used in the analysis. Internal filenames are retained only in the accompanying reproducibility archive.

\begin{table}[t]
\centering
\caption{Evidence objects used in the analysis.}
\label{tab:evidence}
\begin{tabular}{p{0.32\textwidth}p{0.59\textwidth}}
\toprule
Evidence object & Scientific use \\
\midrule
Monthly decomposition table & 660 observed inflows, Haar coarse component, Fourier seasonal target \\
Reduced-scenario table & ten weighted annual inflow scenarios \\
Scenario-validation table & energy score, marginal KS statistic, autocorrelation error, p10/p90 errors \\
Paired Monte Carlo tables & stochastic-policy and deterministic-equivalent outcomes \\
Galerkin convergence table & $L^2$ error versus Haar basis size \\
Water-value benchmark file & \texttt{plan4res} cut slopes and MOSSHOOS marginal values \\
Archived storage-sweep summary & high-storage sensitivity analysis \\
\bottomrule
\end{tabular}
\end{table}

\subsection{Scientific architecture}
Figure~\ref{fig:workflow} places the stochastic hydrology and storage-control
pipeline at the center. The \texttt{plan4res} branch is intentionally drawn as a
parallel benchmark rather than as the parent of MOSSHOOS. The layout also removes
the crossing arrows of the previous figure.

\begin{figure*}[t]
\centering
\resizebox{0.95\textwidth}{!}{%
\begin{tikzpicture}[
node distance=8mm and 12mm,
base/.style={draw,rounded corners=2pt,align=center,font=\small,minimum height=9mm,inner sep=3pt},
hydro/.style={base,fill=blue!7,draw=blue!55!black,minimum width=40mm},
control/.style={base,fill=green!8,draw=green!45!black,minimum width=40mm},
bench/.style={base,fill=orange!10,draw=orange!65!black,minimum width=40mm},
result/.style={base,fill=violet!8,draw=violet!55!black,minimum width=58mm},
arrow/.style={-{Stealth[length=2.1mm]},thick}
]
\node[hydro] (obs) {Observed Manantali inflow\\660 monthly values};
\node[hydro,right=of obs] (season) {Multiscale representation\\Haar + Fourier target};
\node[hydro,right=of season] (scen) {Stochastic scenario engine\\blocked resampling + reduction};

\node[control,below=of obs] (hjb) {Continuous control\\HJB + release threshold};
\node[control,below=of season] (cir) {Seasonal CIR inflow\\$\dif Q_t=\kappa(\theta-Q_t)\dif t+\sigma_0\sqrt Q_t\dif W_t$};
\node[control,below=of scen] (sddp) {Discrete stochastic control\\SDDP cuts and policy};

\node[bench,below=of hjb] (p4r) {\texttt{plan4res} SSV\\reference economics};
\node[bench,below=of cir] (repair) {SMS++ / HiGHS repair\\LP duals recovered};
\node[bench,below=of sddp] (cuts) {Reference cut slopes\\independent benchmark};

\node[result,below=12mm of repair] (eval)
{Evidence-aware evaluation\\scenario tails, water values, paired policies, storage sensitivity, convergence};

\draw[arrow] (obs)--(season);
\draw[arrow] (season)--(scen);
\draw[arrow] (season)--(cir);
\draw[arrow] (cir)--(hjb);
\draw[arrow] (scen)--(sddp);
\draw[arrow] (p4r)--(repair);
\draw[arrow] (repair)--(cuts);
\draw[arrow] (hjb.west) -- ++(-8mm,0) |- (eval.west);
\draw[arrow] (sddp.east) -- ++(8mm,0) |- (eval.east);
\draw[arrow] (cuts.south) -- ++(0,-6mm) -| ([xshift=18mm]eval.north);
\end{tikzpicture}%
}
\caption{Scientific architecture of the study. MOSSHOOS is the central stochastic inflow-to-storage-value pipeline, with a discrete SDDP branch and a continuous CIR--HJB branch. The \texttt{plan4res} lane is an independent economic and software benchmark.}
\label{fig:workflow}
\end{figure*}

\subsection{Seasonal inflow representation and scenario construction}\label{sec:scenario}
Let $x_t$ denote monthly energy inflow. An orthonormal Haar representation writes
\begin{equation}
 x_t=A_J(t)+\sum_{j=1}^{J}D_j(t),
 \label{eq:haar}
\end{equation}
where $A_J$ is a coarse component and $D_j$ are detail components
\cite{mallat2009,daubechies1992}. The archived monthly pipeline uses $J=1$ and
captures 86.517\% of the signal energy in $A_1$. This is a parsimonious
low-frequency/residual split, not a claim that $A_1$ is a pure annual mode. At
monthly resolution it can mix annual structure with slower persistence. The
daily companion analysis resolves the annual cycle principally at a different
wavelet band and retains slower variability separately \cite{affognon2026wavelet};
therefore the daily and monthly decompositions answer different questions.

The seasonal target is represented by a finite Fourier regression,
\begin{equation}
 \theta(t)=a_0+\sum_{h=1}^{H}
 \left[a_h\cos(2\pi h t)+b_h\sin(2\pi h t)\right],
 \label{eq:fourier}
\end{equation}
with coefficients fitted to the coarse monthly component. In the archive this
fit has $R^2=0.84669$. Define residuals $r_t=x_t-\theta(t)$. To retain short-lag
dependence, four-month residual blocks are resampled and added back to the
seasonal target. For candidate scenario $k$ and calendar month $m$,
\begin{equation}
 q_m^{(k)}=\max\{0,\theta_m+r_{m}^{*(k)}\},
 \label{eq:scenario-reconstruct}
\end{equation}
where $r^{*(k)}$ is formed by concatenating sampled blocks. Candidate paths are
then reduced to ten representative annual trajectories with weights $p_k$,
$\sum_kp_k=1$. The reduced law is therefore
\begin{equation}
 \widehat{\mathbb P}_Q=\sum_{k=1}^{10}p_k\,\delta_{q^{(k)}}.
 \label{eq:reduced-law}
\end{equation}
This discrete law drives the SDDP branch. Scenario quality is assessed separately
by a proper multivariate energy score, a marginal KS statistic, autocorrelation
error, and p10/p90 errors \cite{kaut2007,gneiting2007}. A good energy score alone
is not treated as proof of correct tails.

\subsection{Continuous seasonal square-root inflow model}\label{sec:cir}
The continuous cross-check uses
\begin{equation}
 \dif Q_t=\kappa\{\theta(t)-Q_t\}\dif t+\sigma_0\sqrt{Q_t}\dif W_t,
 \qquad Q_0\ge0,
 \label{eq:cir}
\end{equation}
where $\theta(t)$ is the seasonal target in \eqref{eq:fourier}. The square-root
volatility is attractive for nonnegative inflow because fluctuations shrink near
the origin, but positivity must be treated carefully when the target is
seasonal. The archived calibration uses $\kappa=15\,\mathrm{yr}^{-1}$ and
$\sigma_0=21.77$ in the adopted energy-inflow scaling. From the fitted Fourier
target, $\theta_{\min}=4.6639$, $\bar\theta=102.6318$, and
$\theta_{\max}=298.8591$~GWh/month. Consequently
\begin{equation}
 F(t)=\frac{2\kappa\theta(t)}{\sigma_0^2}
 \label{eq:feller-ratio}
\end{equation}
falls to $0.295$ in the driest fitted month, even though its mean-target value is
$6.497$. The uniform Feller condition therefore fails.

\begin{proposition}[First moment and positivity diagnostic]\label{prop:cir}
Assume $\theta:[0,T]\to(0,\infty)$ is continuous and bounded and that
\eqref{eq:cir} is solved with $Q_0\ge0$. Then, whenever the stochastic integral
is integrable,
\begin{equation}
 \E[Q_t]=e^{-\kappa t}Q_0+
 \kappa\int_0^t e^{-\kappa(t-s)}\theta(s)\dif s.
 \label{eq:first-moment}
\end{equation}
Moreover, the uniform condition
$2\kappa\inf_{s\in[0,T]}\theta(s)\ge\sigma_0^2$ is sufficient to keep zero
inaccessible. If that inequality fails, strict positivity cannot be inferred
from the Feller criterion and a nonnegativity-preserving boundary treatment is
required numerically.
\end{proposition}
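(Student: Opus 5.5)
The first-moment formula follows from It\^o's product rule applied to $e^{\kappa t}Q_t$. Well-posedness comes first. The diffusion coefficient $\sigma_0\sqrt{x}$ is H\"older-$\tfrac12$ and the drift is Lipschitz in $x$ uniformly in $t$, since $\theta$ is bounded. The Yamada--Watanabe theorem therefore gives pathwise uniqueness and a unique strong solution. Nonnegativity follows by comparison with the solution started at $0$ with drift $\kappa\theta(t)\ge0$ at the origin; equivalently, one can use the $\sqrt{x^+}$ extension, for which the drift is nonnegative at $0$.

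Then
\[
\dif\bigl(e^{\kappa t}Q_t\bigr)=\kappa e^{\kappa t}\theta(t)\dif t+\sigma_0 e^{\kappa t}\sqrt{Q_t}\dif W_t .
\]
Integrating from $0$ to $t$, multiplying by $e^{-\kappa t}$ and taking expectations gives \eqref{eq:first-moment}, provided the stochastic integral has zero mean. This is exactly the hypothesis ``whenever the stochastic integral is integrable'' in Proposition~\ref{prop:cir}. It can also be verified directly. Localize with $\tau_n=\inf\{s:Q_s\ge n\}$, so the stopped integral is a true martingale. Apply Fatou's lemma together with Gronwall's inequality to $\E[Q_{t\wedge\tau_n}]$ to get $\sup_{s\le T}\E[Q_s]<\infty$. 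Then $\E\int_0^t e^{2\kappa s}Q_s\dif s<\infty$, so the integral is an $L^2$ martingale, and dominated convergence removes the localization.

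For the positivity statement, I would reduce to a comparison argument. Set $\theta_*=\inf_{[0,T]}\theta>0$ and let $\tilde Q$ solve the constant-target CIR equation with $\theta_*$ and the same Brownian motion, started from the same $Q_0>0$. Since the drift $\kappa(\theta(t)-x)\ge\kappa(\theta_*-x)$ and the diffusion coefficients coincide and are H\"older-$\tfrac12$, the Ikeda--Watanabe comparison theorem gives $Q_t\ge\tilde Q_t$ almost surely for all $t\le T$. For the time-homogeneous process $\tilde Q$, the classical Feller test applies. The scale density is
\[
s'(x)=x^{-2\kappa\theta_*/\sigma_0^2}e^{2\kappa x/\sigma_0^2},
\]
so the scale function $s(x)\to-\infty$ as $x\downarrow0$ exactly when $2\kappa\theta_*/\sigma_0^2\ge1$. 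Hence zero is not attained by $\tilde Q$, and therefore not by $Q$ on $[0,T]$.

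For the converse remark, I would not claim that $Q$ hits zero. It suffices to note that the comparison argument no longer yields a strictly positive lower bound. A heuristic supports this: on any interval where $F(t)<1$, freezing $\theta$ produces a CIR process for which $0$ is accessible with positive probability. Consequently, strict positivity cannot be deduced from the Feller criterion, and the numerical scheme must handle the boundary, for instance by reflection or by full truncation in the sense of \cite{lord2010,alfonsi2005}. For the archived calibration, $F_{\min}=0.295$.

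The step I expect to be the main obstacle is the comparison with the time-inhomogeneous drift when $Q_0=0$. In that case one must argue that $Q$ leaves $0$ instantly, because the drift at the origin is $\kappa\theta>0$, and then restart the comparison at small positive times. I would handle this by starting $\tilde Q$ from $Q_\varepsilon>0$ and letting $\varepsilon\to0$.
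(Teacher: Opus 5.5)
Your first-moment argument is the paper's own: the integrating factor $e^{\kappa t}Q_t$, followed by taking expectations of the variation-of-constants formula. The paper simply assumes the martingale term has zero mean. Your localization with $\tau_n$, Fatou, and the It\^o isometry shows this holds automatically under the stated assumptions. That is the right way to discharge the hypothesis, because mere integrability of a local martingale would not by itself force zero mean.

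For positivity your route is genuinely different. The paper applies It\^o's formula to $\log Q_t$ and notes that the singular drift $(\kappa\theta(t)-\tfrac12\sigma_0^2)/Q_t$ is nonnegative exactly when $2\kappa\theta(t)\ge\sigma_0^2$. It then cites the constant-target Feller classification and asserts that the uniform inequality suffices in the seasonal case. You instead bound $Q$ pathwise from below by the homogeneous CIR with target $\theta_*=\inf\theta$, via the Ikeda--Watanabe comparison theorem, and apply the scale-function test to that process.

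What each route buys:
\begin{itemize}
\item Yours supplies exactly the constant-to-seasonal transfer the paper leaves implicit. The bound $Q_t\ge\tilde Q_t$ also lets every known property of the homogeneous CIR pass to $Q$.
\item The paper's route, once completed, works directly with the time-dependent drift and needs neither a comparison theorem nor a time-homogeneous scale function. On $\{\tau_0\le T\}$ one has $\log Q_t\ge\log Q_0-\kappa t+\sigma_0\int_0^tQ_s^{-1/2}\dif W_s$ for $t<\tau_0$. This would force a continuous local martingale to tend to $-\infty$, which is impossible.
\item The paper's route also makes the pointwise sign of $2\kappa\theta(t)-\sigma_0^2$ the operative quantity, which is what the monthly Feller-ratio diagnostic displays.
\end{itemize}

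The $Q_0=0$ case you flag as the main obstacle is easier than you expect. Compare directly with $\tilde Q$ started at $0$. When $2\kappa\theta_*\ge\sigma_0^2$, the origin is an entrance boundary for $\tilde Q$, so $Q_t\ge\tilde Q_t>0$ for all $t\in(0,T]$. If you keep the restart at time $\varepsilon$, you need $Q_\varepsilon>0$ a.s. Justify it through $Q_\varepsilon\ge\tilde Q_\varepsilon$ and the atomless scaled chi-square law of $\tilde Q_\varepsilon$, not through the heuristic that the drift at zero is positive.
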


\noindent A full It\^o-calculus derivation, the exact integrating-factor representation,
first- and second-moment formulas, the constant-target CIR transition law, and the
boundary argument underlying this proposition are collected in
Appendix~\ref{app:ito}.

Figure~\ref{fig:cir} visualizes the fitted seasonal target and this boundary
diagnostic. The point of the figure is not to claim that a Feller failure implies
negative inflow---the CIR state remains a nonnegative square-root process under
appropriate construction---but to show why strict interior positivity is not an
available simplification for the HJB solver.

\begin{figure}[t]
\centering
\includegraphics[width=0.95\textwidth]{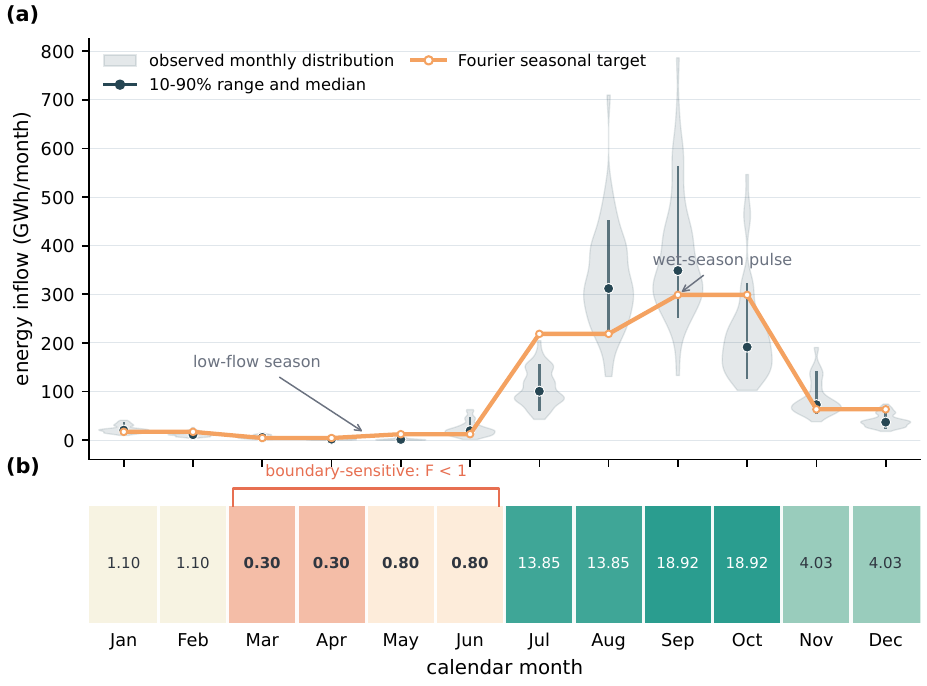}
\caption{Seasonal inflow distribution and boundary diagnostic. (a) Monthly violin envelopes summarize the 55 annual observations; vertical bars show the observed 10--90\% range, dark points the monthly medians, and the orange curve the Fourier seasonal target. (b) Calendar heat strip of the monthly Feller ratio; warm cells with values below one identify months for which the uniform strict-positivity criterion is unavailable. The diagnostic concerns boundary accessibility of the continuous approximation, not physically negative inflow.}
\label{fig:cir}
\end{figure}

\subsection{Discrete stochastic reservoir valuation}\label{sec:sddp}
Let $S_t$ be stored energy, $q_{tk}$ inflow in scenario $k$, $u_t$ turbine
release, $w_t$ spill, $g_t$ thermal backstop generation, and $z_t$ unserved
energy. A risk-neutral stage problem is
\begin{align}
Q_t(S_t)=\min_{u_t,w_t,g_t,z_t}\quad &
 c_g g_t+c_z z_t+c_w w_t
 +\gamma\sum_k p_k Q_{t+1}(S_{t+1,k})
 \label{eq:sddpobj}\\
\text{s.t.}\quad &S_{t+1,k}=S_t+q_{tk}-u_t-w_t,
 \label{eq:balance}\\
&0\le S_{t+1,k}\le S_{\max},\qquad 0\le u_t\le U_{\max},\nonumber\\
&\eta u_t+g_t+z_t=d_t,\qquad g_t,z_t,w_t\ge0.\nonumber
\end{align}
The continuation function is approximated from below by affine cuts,
\begin{equation}
 \widehat Q_{t+1}(S)=\max_{\ell\in\mathcal C_{t+1}}
 \{a_{t\ell}+b_{t\ell}S\}.
 \label{eq:cuts}
\end{equation}

\begin{proposition}[Active SDDP slope as water value]\label{prop:sddp}
Suppose the stage problem is convex and an active continuation cut
$a_{t\ell^*}+b_{t\ell^*}S$ is unique at the evaluated storage state. Then the
local marginal value of an additional unit of stored energy is
\begin{equation}
 \lambda_t^W=-b_{t\ell^*}.
 \label{eq:sddp-water}
\end{equation}
Equivalently, $-b_{t\ell^*}$ is the shadow reduction in optimal future cost
associated with increasing storage.
\end{proposition}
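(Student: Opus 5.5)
The argument is a local differentiation of the cut-based approximation \eqref{eq:cuts}, with convex-analysis language used only to make the word ``unique'' do its work. Fix the evaluated storage state $\bar S$ and define the active set $\mathcal A(\bar S)=\{\ell\in\mathcal C_{t+1}: a_{t\ell}+b_{t\ell}\bar S=\widehat Q_{t+1}(\bar S)\}$.

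\textbf{Step 1: the approximation is affine near $\bar S$.} Since $\mathcal C_{t+1}$ is finite, $\widehat Q_{t+1}$ is a convex piecewise-affine function. Uniqueness of the active cut means $\mathcal A(\bar S)=\{\ell^*\}$. Every inactive cut lies strictly below the maximum at $\bar S$, and there are finitely many of them, so by continuity they stay strictly below $a_{t\ell^*}+b_{t\ell^*}S$ on a neighbourhood $|S-\bar S|<\varepsilon$. On that neighbourhood $\widehat Q_{t+1}(S)=a_{t\ell^*}+b_{t\ell^*}S$. Consequently $\widehat Q_{t+1}$ is differentiable at $\bar S$ with derivative $b_{t\ell^*}$, and its subdifferential is the singleton $\{b_{t\ell^*}\}$ (Danskin's formula for a max of affine functions).

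\textbf{Step 2: the slope is a dual sensitivity.} Recall how the slope was generated. In SDDP, $b_{t\ell^*}$ is obtained from the multipliers of the balance constraint \eqref{eq:balance} in the stage problems $k$, averaged with weights $p_k$ (and, depending on indexing, the factor $\gamma$). By LP duality and convexity of the stage problem, this average is a subgradient of the expected cost-to-go at the trial point where the cut was built. At $\bar S$ the subdifferential of the approximation is a singleton, so this slope is the gradient of the approximate future cost there. Hence
\[
\widehat Q_{t+1}(\bar S+\delta)-\widehat Q_{t+1}(\bar S)=b_{t\ell^*}\delta \quad\text{for } |\delta|<\varepsilon .
\]
An additional $\delta>0$ of stored energy therefore changes the optimal future cost by $b_{t\ell^*}\delta$.

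\textbf{Step 3: sign convention.} Water value is defined in the introduction as the marginal \emph{reduction} in expected future cost per unit of stored water, $\lambda_t^W=-\partial_S \widehat Q_{t+1}(\bar S)$. This gives \eqref{eq:sddp-water}. The cut slope is typically nonpositive, because $\widehat Q_{t+1}$ is nonincreasing in storage: more water can always be spilled at $c_w\ge0$, or cost is otherwise monotone. So $\lambda_t^W\ge0$, which makes the ``shadow reduction'' reading well defined.

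\textbf{Main obstacle.} The difficulty is saying precisely in what sense this is \emph{the} marginal value. The statement holds exactly for the approximation $\widehat Q_{t+1}$, and holds for the true $Q_{t+1}$ only insofar as the cut is tight at $\bar S$.

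To handle this, I would note that a valid cut is a global under-estimator, $Q_{t+1}\ge a+bS$, with equality at its generating point. If $\bar S$ is that point and $Q_{t+1}$ is differentiable there, then a differentiable convex function supported by an affine minorant touching it at $\bar S$ must have that minorant's slope as its gradient. This gives $Q_{t+1}'(\bar S)=b_{t\ell^*}$.

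Otherwise I would state the conclusion as the local marginal value of the current policy's future-cost model. At kinks the uniqueness hypothesis fails, and one obtains only an interval of water values given by the one-sided slopes.
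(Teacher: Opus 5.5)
Your proof is correct for the statement as posed, but its key step is not the one the paper uses.

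\textbf{The paper's route.} The paper (Appendix~\ref{app:sddp}) works on the cut-generation side. It writes the successor recourse problem as an LP parametrised by incoming storage. It observes that the Lagrangian dual function depends on that parameter only through the affine term $-\pi^*_{tk}(s-\bar s)$. Combining weak duality at $s$ with strong duality at $\bar s$, it obtains $-\pi_{tk}\in\partial\Phi_{tk}(\bar s)$. Probability weighting then gives the cut slope $\beta_t=-\sum_k p_k\pi_{tk}$, and the water value is read off as $-\beta_t$ directly from the definition. The case of several active cuts is handled only in a closing one-sentence remark.

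\textbf{Your route.} You take the cut as given and make the uniqueness hypothesis do the work. The finitely many inactive cuts lie strictly below the maximum at $\bar S$, so $\widehat Q_{t+1}$ coincides with the single active cut on a neighbourhood. Hence $b_{t\ell^*}$ is a genuine two-sided derivative, not merely one element of a subdifferential, and the sign convention finishes the argument. The duality fact that the paper actually proves is only recalled in your Step~2.

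\textbf{What each buys.} Your argument gives ``local'' a precise meaning and uses the uniqueness assumption as a real step, which the paper does not. Your supporting-hyperplane remark also correctly isolates what extra is needed to say anything about the true $Q_{t+1}$. The paper's argument supplies the content of the ``Equivalently'' clause: that the slope is a probability-weighted storage-balance shadow price and that the cut is a valid minorant. Your proof assumes this rather than derives it.

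Two side claims in your proposal should be tightened, although neither affects \eqref{eq:sddp-water}.
\begin{itemize}
\item With $c_w>0$ the cost-to-go need not be nonincreasing in storage. With a full reservoir and demand already met by hydro, an extra unit must be spilled at cost $c_w$, so a cut slope can be positive and $\lambda_t^W$ negative. The identity holds regardless of sign, so the monotonicity claim is unnecessary as well as not generally true.
\item An SDDP cut is tight at its trial point only to the current approximate stage value, computed with the downstream cut model. It is not in general tight to the true $Q_{t+1}$. Also, the evaluated state $\bar S$ need not be the point where the active cut was generated. So $Q_{t+1}'(\bar S)=b_{t\ell^*}$ needs tightness at $\bar S$ as an explicit hypothesis (for example at convergence). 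It does not follow from validity of the cut.
\end{itemize}
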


\noindent The complete parametric-LP and dual-sensitivity derivation is given in
Appendix~\ref{app:sddp}. With the storage balance written so that its dual
multiplier is $\pi$, the cut slope is the negative probability-weighted dual,
and the positive water value is the corresponding probability-weighted shadow
price.

\subsection{Continuous-time HJB characterization and release threshold}\label{sec:hjb}
The storage state follows
\begin{equation}
 \dif S_t=(Q_t-u_t-w_t)\dif t,\qquad 0\le S_t\le S_{\max},
 \label{eq:storage-sde}
\end{equation}
and the finite-horizon objective is
\begin{multline}
 V(t,s,q)=\inf_{u,w}\E_{t,s,q}\Bigg[
 \int_t^T e^{-\rho(\tau-t)}
 \ell(\tau,S_\tau,Q_\tau,u_\tau,w_\tau)\dif\tau +e^{-\rho(T-t)}g(S_T)\Bigg].
 \label{eq:objective}
\end{multline}
Dynamic programming yields, in the state-constrained viscosity sense,
\begin{multline}
 -V_t=\min_{(u,w)\in\mathcal U(s,q)}\Big\{
 \ell+(q-u-w)V_s+\kappa[\theta(t)-q]V_q +\tfrac12\sigma_0^2 qV_{qq}-\rho V\Big\},
 \qquad V(T,s,q)=g(s).
 \label{eq:hjb}
\end{multline}
The state constraints enter through the admissible control set: release is
limited by water availability near $s=0$, while spill becomes active at the upper
storage boundary.

\begin{proposition}[Marginal release threshold]\label{prop:threshold}
Assume $u\mapsto\ell(t,s,q,u,w)$ is convex and differentiable for fixed
$(t,s,q,w)$, and define the marginal avoided system cost
$m(t,s,q,u)=-\partial\ell/\partial u$ and the continuous water value
$\lambda_W=-V_s$. If $m$ is nonincreasing in $u$, the HJB minimizer satisfies
\begin{equation}
 u^*=\begin{cases}
 0, & m(t,s,q,0)\le\lambda_W,\\
 U_{\max}, & m(t,s,q,U_{\max})\ge\lambda_W,\\
 u:\ m(t,s,q,u)=\lambda_W, & \text{otherwise},
 \end{cases}
 \label{eq:threshold}
\end{equation}
subject to storage feasibility.
\end{proposition}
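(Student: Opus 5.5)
The plan is to isolate the $u$-dependent part of the Hamiltonian in \eqref{eq:hjb} and reduce the claim to a one-dimensional convex minimization on $[0,U_{\max}]$. Fix $(t,s,q)$ and the spill $w$, and treat $V_s$, $V_q$, $V_{qq}$, $V$ as given numbers. This is legitimate at points of differentiability, or for the test-function derivatives in the viscosity sense. The terms $\kappa[\theta(t)-q]V_q$, $\tfrac12\sigma_0^2 qV_{qq}$ and $-\rho V$ do not depend on $u$. The minimization over $u$ is therefore equivalent to minimizing
\begin{equation*}
 \phi(u)=\ell(t,s,q,u,w)+(q-u-w)V_s=\ell(t,s,q,u,w)+\lambda_W u+\text{const},
\end{equation*}
using $\lambda_W=-V_s$. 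By assumption $\phi$ is convex and differentiable, with $\phi'(u)=\lambda_W-m(t,s,q,u)$. Since $m$ is nonincreasing, $\phi'$ is nondecreasing, which is consistent with convexity.

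Next I apply the first-order (KKT) conditions for a differentiable convex function on an interval. A point $u^*$ minimizes $\phi$ on $[0,U_{\max}]$ iff one of three cases holds:
\begin{itemize}
\item $u^*=0$ and $\phi'(0)\ge0$;
\item $u^*=U_{\max}$ and $\phi'(U_{\max})\le0$;
\item $u^*$ is interior and $\phi'(u^*)=0$.
\end{itemize}
Translating through $\phi'=\lambda_W-m$ gives the three branches of \eqref{eq:threshold}.

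\begin{itemize}
\item If $m(\cdot,0)\le\lambda_W$, monotonicity gives $\phi'\ge0$ on the whole interval, so $u^*=0$.
\item If $m(\cdot,U_{\max})\ge\lambda_W$, then $\phi'\le0$ throughout, so $u^*=U_{\max}$.
\item Otherwise $m(\cdot,0)>\lambda_W>m(\cdot,U_{\max})$. The map $u\mapsto m(t,s,q,u)$ is continuous: $\phi$ is a differentiable convex function of one variable, so it is $C^1$. The intermediate value theorem then gives a root of $m(t,s,q,u)=\lambda_W$. Every such root minimizes $\phi$, and the set of roots is an interval; the root is unique if $m$ is strictly decreasing.
\end{itemize}

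The branches are consistent in the degenerate case $m(\cdot,0)=\lambda_W$, because then $\phi'\ge0$ from $0$ onward and $u=0$ is also a root.

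Finally I account for storage feasibility. The set $\mathcal U(s,q)$ in \eqref{eq:hjb} shrinks the release interval near $s=0$, to $u\le q-w$ when the storage is empty, and forces spill at $s=S_{\max}$. Restricting to a subinterval $[0,\bar u]$ keeps the same convex argument with $U_{\max}$ replaced by $\bar u$. The formula therefore holds with $u^*$ projected onto the feasible interval, which is the meaning of ``subject to storage feasibility''. For joint minimization in $(u,w)$, I will note that $u^*$ is obtained for each fixed $w$, and that the statement is made for fixed $w$.

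The main obstacle is justifying the use of $V_s$ when $V$ is only a viscosity solution. I plan to state the result at points where $V$ is differentiable in $s$, or equivalently for smooth test functions touching $V$. The pointwise Hamiltonian minimization above then applies verbatim. The rest is routine one-dimensional convex analysis.
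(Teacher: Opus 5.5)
Your proposal is correct and follows essentially the same route as the paper's derivation in Appendix~\ref{app:hjb}: you drop the $u$-independent terms to get $\ell-uV_s=\ell+\lambda_W u$, then apply the first-order (KKT) conditions for a differentiable convex function on $[0,\bar u]$, with convexity making those conditions sufficient. Your intermediate-value argument for existence of the interior root and your explicit projection onto the feasible release interval are small refinements that the paper's multiplier derivation leaves implicit.
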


\noindent Appendix~\ref{app:hjb} derives \eqref{eq:hjb} directly from the dynamic
programming principle and It\^o's formula, states a verification result, and gives
the full KKT proof of \eqref{eq:threshold}, including an explicit quadratic-cost
specialization.

Propositions~\ref{prop:sddp} and \ref{prop:threshold} clarify the conceptual
bridge between the two control branches. They do not imply that a monthly SDDP
sequence and a continuous HJB trajectory must coincide pointwise; they identify
the common marginal economic object that can be benchmarked after clocks,
inflows, costs, and terminal conditions are harmonized.

\subsection{Haar approximation rate used in the numerical diagnostic}
The archived HJB calculation includes a Haar storage-basis convergence table.
The observed slope should not be called a universal convergence theorem for the
entire nonlinear HJB solver. What can be justified directly is the approximation
rate of the storage projection.

\begin{theorem}[Haar projection error for an $H^1$ storage profile]\label{thm:haar}
Let $f\in H^1(0,S_{\max})$ and let $P_Jf$ be its piecewise-constant Haar
projection on $2^J$ equal storage cells. With $M=2^J$ basis cells,
\begin{equation}
 \|f-P_Jf\|_{L^2(0,S_{\max})}
 \le C\,S_{\max}M^{-1}\|f'\|_{L^2(0,S_{\max})},
 \label{eq:haar-rate}
\end{equation}
for a constant $C$ independent of $M$. Hence the storage-projection error is
$O(M^{-1})$ for $H^1$ profiles.
\end{theorem}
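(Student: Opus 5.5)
The plan is to reduce the estimate to a one-dimensional Poincaré inequality on each cell of the uniform partition and then sum over the cells. The first step is to identify $P_J$: the Haar system up to level $J$ on $(0,S_{\max})$ spans exactly the piecewise constants on the $M=2^J$ dyadic cells $I_i=[ih,(i+1)h)$, with $h=S_{\max}/M$. Hence the orthogonal $L^2$ projection $P_Jf$ equals the cell average $\bar f_i=h^{-1}\int_{I_i}f$ on each $I_i$. The error then splits orthogonally,
\[
\|f-P_Jf\|_{L^2(0,S_{\max})}^2=\sum_{i=0}^{M-1}\int_{I_i}|f-\bar f_i|^2\,\mathrm{d}s ,
\]
so it suffices to bound each local term by $C^2h^2\int_{I_i}|f'|^2$.

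For the local bound I would use that $f\in H^1$ in one dimension is absolutely continuous, so $f(x)-f(y)=\int_y^x f'$ for all $x,y\in I_i$. By Cauchy--Schwarz, $|f(x)-f(y)|^2\le h\int_{I_i}|f'|^2$. Writing $f(x)-\bar f_i=h^{-1}\int_{I_i}(f(x)-f(y))\,\mathrm{d}y$ and applying Jensen's inequality gives
\[
\int_{I_i}|f-\bar f_i|^2\le h^{-1}\int_{I_i}\int_{I_i}|f(x)-f(y)|^2\,\mathrm{d}y\,\mathrm{d}x\le h^2\int_{I_i}|f'|^2 .
\]
This already yields the statement with $C=1$. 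If one wants the sharp constant, the Poincaré--Wirtinger inequality on an interval of length $h$ gives $C=1/\pi$, because the first nonzero Neumann eigenvalue is $(\pi/h)^2$. In either case $C$ does not depend on $M$ or on the cell index, since each cell is a translate of the same interval.

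Summing over $i$ gives $\|f-P_Jf\|^2\le C^2h^2\|f'\|^2$. Taking square roots and substituting $h=S_{\max}M^{-1}$ yields \eqref{eq:haar-rate}, and the $O(M^{-1})$ rate follows immediately.

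I expect no step to be a serious obstacle. The only point that needs care is the identification of $P_J$ with cell averaging, that is, checking that the span of the scaling function together with the wavelets up to level $J-1$ equals the piecewise constants on $2^J$ cells, with the Haar functions rescaled to $(0,S_{\max})$ and counted so that there are $M=2^J$ basis functions. Once that is settled, the argument is the standard local Poincaré estimate, and it uses nothing from Propositions~\ref{prop:cir}--\ref{prop:threshold}.
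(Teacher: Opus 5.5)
Your proposal is correct and follows essentially the same route as the paper: you identify $P_J$ with cell averaging on the $M$ cells of width $h=S_{\max}/M$, apply the one-dimensional Poincar\'e inequality on each cell, sum over cells, and substitute for $h$. The only differences are two justifications the paper omits: you prove the local inequality explicitly (getting $C=1$, or $C=1/\pi$ sharply), and you verify that the Haar span up to level $J$ consists of exactly the piecewise constants on the $2^J$ cells, whereas the paper cites Poincar\'e and defines $P_J$ directly as the cell-average projection.
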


\noindent A complete cellwise proof and the distinction between projection error and
full HJB discretization error are given in Appendix~\ref{app:haar}.

\subsection{Independent \texttt{plan4res} benchmark and solver repair}\label{sec:p4r}
The \texttt{plan4res} benchmark is retained because it provides an independent
large-system valuation of the same marginal storage object. Its SSV Bellman
recursion can be written schematically as
\begin{align}
V_n(E_n,\xi_n)&=\min_{x_n\in\mathcal X_n}
\left\{C_n(x_n;\xi_n)+\E_n[V_{n+1}(E_{n+1},\xi_{n+1})]\right\},
\label{eq:p4r-bellman}\\
E_{n+1}&=E_n+A_n-H_n-W_n,
\qquad 0\le E_{n+1}\le E_{\max}.
\label{eq:p4r-balance}
\end{align}
Its continuation function is represented by Benders cuts analogous to
\eqref{eq:cuts}. In the tested SMS++ build, an integer overload of
\texttt{set\_par} hides the inherited double-valued overload, so the requested
integrality-relaxation parameter is assigned in the wrong parameter space. HiGHS
then receives a MIP rather than the LP required for dual-based cut generation.
Restoring the base overload set,
\begin{center}
\texttt{using CDASolver::set\_par;}
\end{center}
causes the relaxation flag to be activated, the subproblem to be solved as an LP,
and the required storage-balance dual and Benders cuts to be returned
\cite{huangfu2018}. The downstream SSV, SIM, and CEM entry points then complete
in the tested configuration. This establishes local open-source reproducibility;
it is not a claim that the source patch has already been accepted upstream.

\begin{algorithmblock}{Stochastic valuation and independent benchmark}
\begin{enumerate}[leftmargin=1.5em,itemsep=1pt]
\item Fit the monthly coarse/seasonal representation and construct blocked candidate inflow paths.
\item Reduce the candidate paths to weighted scenarios and quantify marginal, dependence, and tail errors.
\item Solve the discrete MOSSHOOS SDDP problem and export active cut slopes and policies.
\item Solve the continuous CIR--HJB benchmark with a nonnegative inflow-boundary treatment and compute $-V_s$.
\item Repair and execute the \texttt{plan4res} SSV LP path; export its active cut slopes.
\item Compare marginal water-value levels without forcing a false stage-to-month alignment.
\item Evaluate stochastic and deterministic-equivalent policies on paired Monte Carlo paths and report uncertainty intervals.
\end{enumerate}
\end{algorithmblock}

\subsection{Evaluation and uncertainty quantification}\label{sec:eval}
The 50 stochastic-policy and deterministic-equivalent rows share run and scenario
indices. For outcome $y$, define
$\Delta_i=y_i^{\MS}-y_i^{\MD}$. We resample the 50 paired differences 20,000
times with a fixed seed and use the 2.5th and 97.5th percentiles as the reported
95\% bootstrap interval \cite{efron1993}. An interval containing zero is reported
as ``no detectable difference'' rather than ``equality.''

For the merit-order comparison, a \texttt{plan4res} stage is classified as
matching a listed thermal/scarcity level $c_g$ when
\begin{equation}
 \min_g|\lambda_n-c_g|\le\varepsilon_{\rm match},
 \qquad \varepsilon_{\rm match}=10^{-3}\ \mathrm{USD/MWh}.
 \label{eq:tolerance}
\end{equation}
This tolerance reproduces the archived 12-of-18 classification; the count is
unchanged at $10^{-2}$ but decreases at $10^{-4}$~USD/MWh, so the tolerance is
reported explicitly.

\section{Results}\label{sec:results}

\subsection{Seasonal inflow representation and scenario adequacy}
Figure~\ref{fig:inflow} overlays the observed monthly distribution, Fourier
seasonal target, ten reduced scenarios, and their weighted mean in one view. The
scenario set preserves the wet-season timing but places too much mass above the
observed median during the wet season and does not reproduce the lower tail
satisfactorily.

\begin{figure}[t]
\centering
\includegraphics[width=0.95\textwidth]{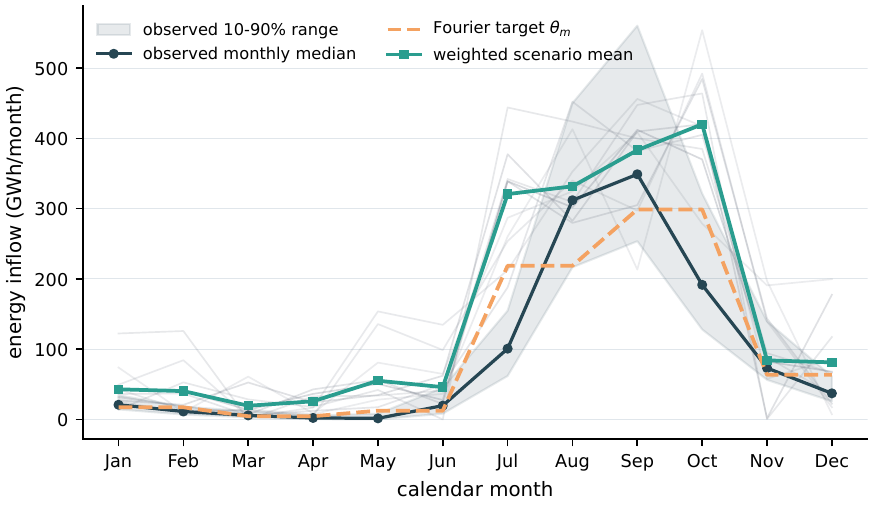}
\caption{Observed Manantali seasonality and the reduced stochastic scenario set. The single figure replaces the earlier two-panel presentation so the reader can see directly how the reduced paths relate to the observed envelope and seasonal target.}
\label{fig:inflow}
\end{figure}

The blocked seasonal construction has energy score 291.01 versus 555.60 for
naive independent monthly sampling, a 47.62\% improvement. However, the marginal
KS statistic is 0.927 ($p=2.18\times10^{-56}$), maximum autocorrelation error is
0.168, p10 relative error is 103.86\%, and p90 relative error is 18.47\%.
Table~\ref{tab:scenario_diag} makes the interpretation explicit.

\begin{table}[t]
\centering
\caption{Scenario diagnostics and their scientific interpretation.}
\label{tab:scenario_diag}
\begin{tabular}{>{\raggedright\arraybackslash}p{0.27\textwidth}p{0.22\textwidth}>{\raggedright\arraybackslash}p{0.42\textwidth}}
\toprule
Diagnostic & Archived value & Interpretation \\
\midrule
Energy score & 291.01 vs 555.60 & Joint forecast geometry improves strongly relative to independent monthly sampling. \\
Marginal KS & 0.927; $p=2.18\times10^{-56}$ & Marginal scenario distribution is not calibrated to the observed sample. \\
Maximum ACF error & 0.168 & Short-lag dependence is only partially reproduced. \\
p10 relative error & 103.86\% & Dry-tail behavior is inadequate for operational drought-risk inference. \\
p90 relative error & 18.47\% & Upper-tail representation is better than p10 but still imperfect. \\
\bottomrule
\end{tabular}
\end{table}

The main stochastic-model limitation is therefore hydrological, not algorithmic:
blocking improves one joint score but does not yet capture the low-flow tail that
should create the largest value for hedging.

\subsection{Continuous inflow model and boundary behavior}
The archived CIR parameters imply a short mean-reversion timescale relative to the
annual seasonal cycle, while the fitted seasonal target varies strongly between
dry and wet months. Figure~\ref{fig:cir} shows that the resulting Feller ratio is
well above one in the wet season but falls below one around the lowest seasonal
target. This corrects the earlier global-positivity interpretation: the data do
not support using a single mean-based Feller ratio to exclude the zero boundary.
The HJB implementation should therefore employ a nonnegative discretization or
explicit degenerate-boundary condition near $q=0$.

\subsection{Water-value benchmark: stochastic control first, software benchmark second}
Figure~\ref{fig:water} compares the marginal-value distributions without
pretending that the 18 \texttt{plan4res} stages and 12 MOSSHOOS months share a
common clock. Panel (a) isolates the operating-value range and displays the two
sets as strip distributions with their means. Panel (b) restores the full
\texttt{plan4res} scarcity spectrum on a logarithmic scale and overlays the narrow
MOSSHOOS range, making clear both the agreement near the dominant thermal level
and the much larger scarcity values present in the benchmark.

\begin{figure}[t]
\centering
\includegraphics[width=0.95\textwidth]{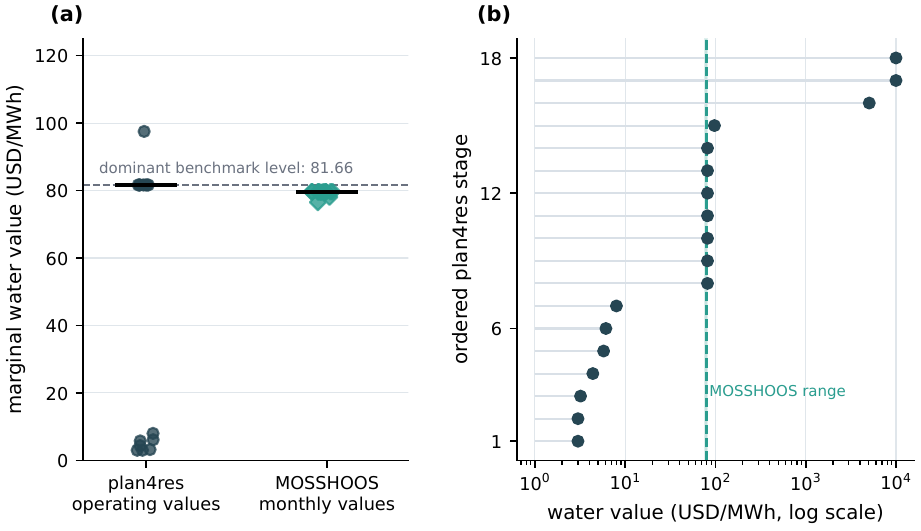}
\caption{Marginal water-value benchmark without artificial temporal alignment. (a) Operating-range values for \texttt{plan4res} and MOSSHOOS, with horizontal median markers. (b) Ordered \texttt{plan4res} values on a logarithmic scale; the shaded vertical band is the MOSSHOOS range. The dominant \texttt{plan4res} level is 81.6649~USD/MWh and the MOSSHOOS mean is 79.2466~USD/MWh.}
\label{fig:water}
\end{figure}

The mean-level agreement is 97.04\%, corresponding to a 2.96\% difference.
Under the declared matching tolerance, 12 of 18 \texttt{plan4res} cut slopes
coincide with listed thermal/scarcity levels and the remaining slopes lie at
intermediate probability-weighted levels. The comparison therefore supports
economic consistency of the marginal object, not equality of trajectories.

\subsection{Paired policy comparison}
The raw paired tables give mean annual costs of 334,746.26~USD/cycle for the
stochastic policy and 334,738.66~USD/cycle for the deterministic-equivalent
policy. The paired mean difference is $+7.60$~USD/cycle with 95\% bootstrap
interval $[-57.64,55.27]$. Mean deficits are 181.3541 and 181.3483~GWh,
respectively, for a paired difference of $+0.0059$~GWh with interval
$[-0.0639,0.0678]$~GWh. Figure~\ref{fig:effects} shows the run-level paired differences together
with the paired-bootstrap mean and interval in the original physical units, so
sampling variability and the estimated effect can be read from the same panel.

\begin{figure}[t]
\centering
\includegraphics[width=0.94\textwidth]{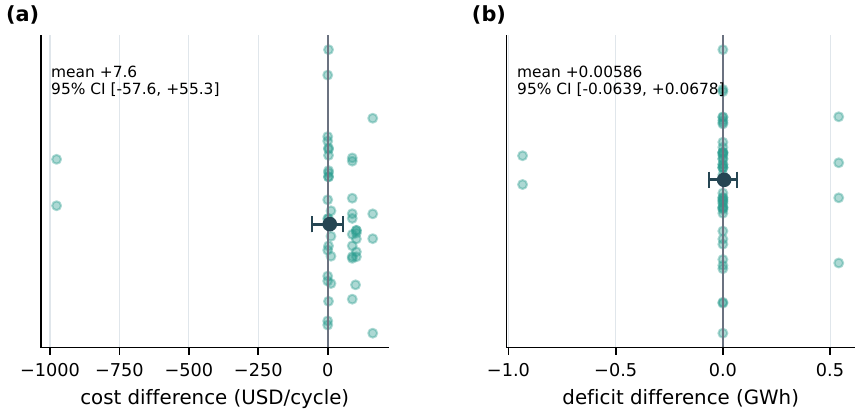}
\caption{Paired stochastic-policy effects. (a) Annual operating-cost differences and (b) annual deficit differences, defined as stochastic minus deterministic-equivalent policy. Points are run-level paired differences; the larger marker and horizontal bar give the paired mean and 95\% bootstrap interval. Both intervals contain zero.}
\label{fig:effects}
\end{figure}

Thus no stochastic-policy advantage is detected in this archived operating
configuration. This does not invalidate stochastic optimization; it is consistent
with either an operating regime where the deterministic equivalent is already
adequate or a scenario law that does not yet reproduce the rare/persistent events
for which hedging should matter.

\subsection{Storage-regime sensitivity}
The storage sweep indicates that water-value control improves on greedy release
by 1.53\% at 0.1 times mean-monthly inflow of usable storage and by 17.11\% at
four times mean inflow. The incremental stochastic gain over the deterministic
equivalent remains approximately zero over the principal archived operating range.
At six times mean-monthly inflow, the reported 9.58\% stochastic gain is retained
as a storage-sensitivity result from the archived project summary and is therefore
interpreted cautiously. Figure~\ref{fig:sweep} summarizes both performance contrasts
across storage ratios and distinguishes the six-times sensitivity point from the
principal operating-range results.

\begin{figure}[t]
\centering
\includegraphics[width=0.95\textwidth]{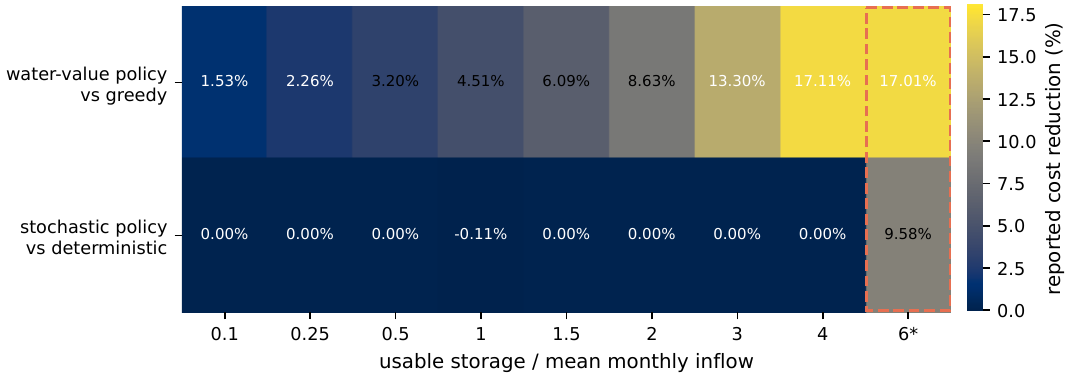}
\caption{Storage-regime sensitivity shown as a heat map of reported cost reductions. The first row compares the water-value policy with greedy release; the second compares the stochastic and deterministic-equivalent policies. The $6\times$ column (asterisk and dashed outline) denotes the additional high-storage sensitivity result reported in the archived project summary.}
\label{fig:sweep}
\end{figure}

\subsection{Storage-basis convergence}
The five archived wavelet--Galerkin points yield a global log--log slope of
$-0.9928$ (Fig.~\ref{fig:conv}). With only five ordered refinement levels, a
histogram would discard the convergence structure rather than clarify it. We
therefore show both the error curve and the local empirical orders between
successive basis sizes. The global behavior is consistent with the $O(M^{-1})$
Haar storage-projection rate in Theorem~\ref{thm:haar}, while the local orders show
the expected finite-resolution variation around one. This is not, on its own, a
proof that the entire state-constrained nonlinear HJB scheme has first-order error
in every state and time variable.

\begin{figure}[t]
\centering
\includegraphics[width=0.95\textwidth]{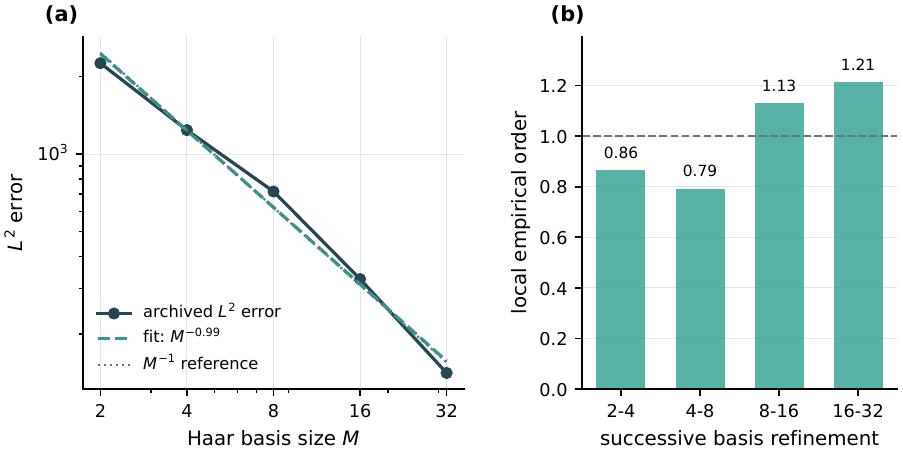}
\caption{Storage-basis convergence diagnostic. (a) Archived $L^2$ errors, global log--log fit, and an $M^{-1}$ reference slope. (b) Local empirical convergence orders between successive basis refinements. The global fitted slope is $-0.9928$, consistent with first-order Haar projection of an $H^1$ storage profile.}
\label{fig:conv}
\end{figure}

\section{Discussion}\label{sec:discussion}
The results support three linked conclusions. First, the scientific contribution
is the propagation of hydrological uncertainty into seasonal storage value through
two complementary control representations: a weighted scenario law for SDDP and
a continuous seasonal square-root diffusion for HJB analysis. Second, the close
agreement in the economically active water-value range is useful cross-validation,
but it does not compensate for the weak lower-tail calibration of the current
scenario generator. Third, the absence of a detectable stochastic-policy advantage
at the archived operating point should be interpreted together with that tail
limitation rather than as evidence that hydrological uncertainty is unimportant.
The discussion therefore proceeds from interpretation of the water-value results,
through hydrological limitations, to the role of the independent benchmark and the
system-level extensions required for operational use.

\subsection{Interpretation of stochastic water-value results}
The MOSSHOOS framework links uncertain inflow to a marginal storage opportunity
cost. In the discrete formulation, the active SDDP cut slope is the shadow price of
an additional unit of stored energy. In the continuous formulation, the same object
appears as the storage derivative of the HJB value function. This shared economic
interpretation explains why the two control representations can be compared even
though their numerical machinery and temporal discretizations differ.

The paired Monte Carlo experiment shows that the stochastic and
deterministic-equivalent policies are statistically indistinguishable at the archived
operating point. That result is informative but should not be generalized beyond the
tested configuration. The scenario diagnostics simultaneously show a severe lower-tail
deficiency, so the current stochastic law may not generate the persistent drought
states in which precautionary storage has the greatest value. The appropriate
interpretation is therefore conditional: no incremental stochastic-policy benefit is
detected under the archived scenario law and operating regime, while the potential
value of improved hydrological uncertainty representation remains an open empirical
question.

\subsection{Multiscale hydrology and tail-risk limitations}
The single-level monthly Haar split is best viewed as a transparent archival baseline.
It separates a coarse low-frequency component before blocked residual resampling, but
it cannot isolate the annual and slower interannual scales that may govern persistent
drought. The daily companion analysis \cite{affognon2026wavelet} shows that annual and
multi-year components can be separated explicitly. A direct multiscale ablation would
therefore be the most informative next test: rebuild the monthly scenario law with
deeper or cross-scale features and compare energy score, p10/p90 errors, dry-spell
duration, autocorrelation, and out-of-sample operating cost under an identical policy
evaluation design.

The time-varying Feller ratio provides a second warning about the dry-season regime.
Because the ratio falls below one around the minimum seasonal target, the continuous
square-root approximation cannot rely on a uniform strict-positivity guarantee. This
does not invalidate the diffusion model, but it requires an explicit nonnegative
numerical treatment at the inflow boundary and cautions against calibrating the model
from average conditions alone.

A mathematically natural tail extension, if event diagnostics justify discontinuous
innovations, is the state-constrained jump diffusion
\begin{equation}
 \dif Q_t=\kappa\{\theta(t)-Q_{t-}\}\dif t
 +\sigma_0\sqrt{Q_{t-}}\dif W_t
 +\int_{\mathcal Z}\xi(Q_{t-},z)\,\widetilde N(\dif t,\dif z),
 \qquad Q_{t-}+\xi(Q_{t-},z)\ge0,
 \label{eq:jump-extension}
\end{equation}
where $\widetilde N$ is a compensated Poisson random measure with L\'evy measure
$\nu$. The HJB operator would then gain the nonlocal term
\begin{equation}
 \mathcal J V(t,s,q)=\int_{\mathcal Z}\!\left[
 V(t,s,q+\xi)-V(t,s,q)-\xi V_q(t,s,q)\mathbf 1_{\{|\xi|<1\}}
 \right]\nu(\dif z).
 \label{eq:jump-generator}
\end{equation}
Equations~\eqref{eq:jump-extension}--\eqref{eq:jump-generator} are a candidate
extension only, not a fitted result of the present study. Hydrological event analysis
must first justify jump occurrence and sign, while $\xi$ and $\nu$ must be estimated
and validated out of sample; the nonnegativity constraint prevents a jump law from
creating physically meaningless negative inflow \cite{applebaum2009,katz2002}.

\subsection{Independent plan4res benchmark and reproducibility}
The \texttt{plan4res} comparison is valuable because it supplies an independent
economic benchmark for the marginal value of stored water. In the tested open-source
configuration, the solver-interface defect prevented recovery of the LP duals needed
for Benders cuts. Restoring the correct LP formulation makes those duals available and
allows the SSV chain to execute with HiGHS. The resulting dominant marginal level of
81.6649~USD/MWh is close to the MOSSHOOS mean of 79.2466~USD/MWh, supporting
consistency of the economic scale.

This agreement should nevertheless be interpreted at the level of the marginal object,
not as equality of complete trajectories. The 18 \texttt{plan4res} stages and the 12
MOSSHOOS months do not share a common time index, and the benchmark contains scarcity
levels far outside the normal MOSSHOOS operating range. The contribution of the
open-source repair is therefore methodological and reproducibility-oriented: it makes
an independent reference executable and exposes the dual quantities required for a
transparent water-value comparison, while the central scientific claim remains the
stochastic inflow-to-storage-value relationship.

\subsection{System-level implications under variable renewables and ancillary services}
The present benchmark values stored water against an exogenous reference generation
fleet. A more operational Senegalese application would construct joint
inflow--solar--wind--load scenarios and value hydropower against stochastic residual
load. The relevant outputs would then include not only marginal water value but also
renewable curtailment, thermal generation and starts, deficits, and reserve scarcity.
This extension is important because the system value of seasonal storage may increase
as variable renewable penetration changes the timing and frequency of scarcity.

The current objective also captures energy arbitrage but not spinning or regulating
reserve, fast load following, frequency response, or voltage/reactive-power support.
Those services require finer chronological resolution and, for voltage support, an
explicit network representation. They should be introduced through co-optimization
rather than by attaching an external premium to the monthly water value. A broader
triangulation with GENeSYS-MOD and/or openTEPES \cite{loeffler2017,opentepes2022}
would be meaningful only after harmonizing inflow, residual load, unit availability,
terminal conditions, discounting, and scarcity penalties; comparison should then use
common system outputs rather than native stage indices.

\subsection{Physical scope and economic interpretation for the OMVS system}
Operational conclusions ultimately require a coupled representation of the wider OMVS
hydraulic system. Manantali, Gouina, and F\'elou should be linked through mass balance,
routing and travel time, head-dependent conversion, turbine limits, and environmental
and multi-purpose releases; Diama should enter through its downstream regulation and
water-management role rather than be treated as an identical storage plant
\cite{loucks2017}. The conceptual cascade cross-check in the InFlow preprint
\cite{affognon2026inflow} is methodological evidence, not an OMVS calibration.

For local economic interpretation, using an illustrative exchange rate of
570~XOF per USD, the two benchmark values correspond to approximately 46.55 and
45.17~XOF/kWh, respectively. These quantities are marginal storage opportunity
costs, not average tariffs or observed generation costs. Any policy comparison with
SENELEC thermal production should therefore use a verified marginal avoided cost
for the relevant diesel- or HFO-fired units and operating period.

\section{Conclusion}\label{sec:conclusion}
This study develops and evaluates a stochastic seasonal-storage valuation
framework for Manantali, with an independent open-source benchmark. The MOSSHOOS
workflow begins with uncertain inflow, constructs a weighted seasonal scenario law
for SDDP, introduces a continuous seasonal square-root diffusion for HJB analysis,
and connects both control representations through the marginal value of stored water. The revised
mathematical statements make that connection explicit: active SDDP slopes are
storage shadow prices, HJB release follows a marginal-cost threshold, and the
observed Haar storage-basis convergence is consistent with the expected $H^1$
projection rate.

The empirical evidence is deliberately mixed. The reduced scenarios improve the
energy score by 47.6\%, and MOSSHOOS produces a mean marginal value within 3\%
of the independently recovered \texttt{plan4res} benchmark. At the same time,
the lower-tail calibration is poor, the time-varying CIR Feller ratio falls below
one in the driest season, and the paired Monte Carlo experiment finds no detectable
stochastic-policy advantage at the archived operating configuration. These are
not contradictory results: they identify hydrological tail representation as the
current bottleneck.

Future work should prioritize deeper multiscale scenario construction, persistent
drought and spill-tail validation, common residual-load experiments with solar and
wind, independent replication of the high-storage regime, and eventual calibration
of the coupled OMVS system.
The repaired \texttt{plan4res} chain remains valuable as an independent reference,
but the scientific success criterion is whether improved stochastic inflow models
produce stable, interpretable, and out-of-sample useful water values.

\section*{Acknowledgments}
This research benefited from the support of the Fondation Math\'ematique Jacques
Hadamard (FMJH) through the Programme Gaspard Monge pour l'Optimisation, la
recherche op\'erationnelle et leurs interactions avec les sciences des donn\'ees
(PGMO). The authors acknowledge the \texttt{plan4res} consortium for the open
modeling framework; \texttt{plan4res} received funding from the European Union's
Horizon 2020 research and innovation programme under grant agreement No.~773897.
The authors also acknowledge the MOSSHOOS project partners and the institutions
responsible for the Senegal River inflow records.

\section*{Data and Code Availability}
The machine-readable data tables supporting the analyses and figures, together with
the plotting scripts and manuscript source files, are provided in the accompanying
reproducibility package.

\appendix
\section{It\^o calculus and exact identities for the seasonal inflow diffusion}
\label{app:ito}
This appendix develops the stochastic-calculus results used in
Section~\ref{sec:cir}. Let
\begin{equation}
 \dif Q_t=\kappa\{\theta(t)-Q_t\}\dif t+\sigma_0\sqrt{Q_t}\dif W_t,
 \qquad Q_{t_0}=q\ge0,
 \label{eq:app-cir}
\end{equation}
where $\theta$ is deterministic, continuous and bounded on a finite horizon.
The model is time-inhomogeneous because the seasonal target changes with the
calendar. Standard square-root diffusion theory gives a nonnegative weak solution;
strict interior positivity requires a stronger boundary condition than mere
nonnegativity \cite{cir1985,feller1951}.

\subsection{It\^o formula and infinitesimal generator}
For any $f\in C^{1,2}([0,T]\times(0,\infty))$, It\^o's formula applied before a
possible boundary contact gives
\begin{align}
 \dif f(t,Q_t)=&\Big[f_t+\kappa\{\theta(t)-Q_t\}f_q
 +\tfrac12\sigma_0^2Q_t f_{qq}\Big]\dif t \\
 &+\sigma_0\sqrt{Q_t}\,f_q\dif W_t.
 \label{eq:app-ito-general}
\end{align}
Hence the time-dependent infinitesimal generator of the inflow process is
\begin{equation}
 (\mathcal L_t^Q f)(q)=\kappa\{\theta(t)-q\}f_q(q)
 +\tfrac12\sigma_0^2q f_{qq}(q).
 \label{eq:app-generator}
\end{equation}
This is precisely the inflow operator appearing in the HJB equation.

\subsection{Exact integrating-factor representation}
Take $f(t,q)=e^{\kappa t}q$ in \eqref{eq:app-ito-general}. Because
$f_t=\kappa e^{\kappa t}q$, $f_q=e^{\kappa t}$ and $f_{qq}=0$,
\begin{equation}
 \dif\big(e^{\kappa t}Q_t\big)
 =\kappa e^{\kappa t}\theta(t)\dif t
 +\sigma_0e^{\kappa t}\sqrt{Q_t}\dif W_t.
 \label{eq:app-integrating-factor}
\end{equation}
Integrating from $t_0$ to $t$ and multiplying by $e^{-\kappa t}$ yields the exact
variation-of-constants representation
\begin{align}
 Q_t={}&e^{-\kappa(t-t_0)}q
 +\kappa\int_{t_0}^{t}e^{-\kappa(t-s)}\theta(s)\dif s \\
 &+\sigma_0\int_{t_0}^{t}e^{-\kappa(t-s)}\sqrt{Q_s}\dif W_s.
 \label{eq:app-mild-solution}
\end{align}
Equation~\eqref{eq:app-mild-solution} is an exact solution representation, but it
is not an elementary closed-form sample path because $Q_s$ remains inside the
stochastic integral. For a genuinely time-varying $\theta(t)$ there is therefore
no simple noncentral-$\chi^2$ transition formula of the standard constant-target
CIR form. The representation is nevertheless sufficient for exact moment identities
and for the control generator used below.

\begin{proposition}[First and second moments]\label{prop:app-moments}
Let $m_1(t)=\E[Q_t]$ and $m_2(t)=\E[Q_t^2]$. Under the usual square-integrability
conditions,
\begin{align}
 m_1(t)={}&e^{-\kappa(t-t_0)}q
 +\kappa\int_{t_0}^{t}e^{-\kappa(t-s)}\theta(s)\dif s,
 \label{eq:app-m1}\\
 m_2(t)={}&e^{-2\kappa(t-t_0)}q^2 \\
 &+\int_{t_0}^{t}e^{-2\kappa(t-s)}
 \big(2\kappa\theta(s)+\sigma_0^2\big)m_1(s)\dif s.
 \label{eq:app-m2}
\end{align}
Consequently $\operatorname{Var}(Q_t)=m_2(t)-m_1(t)^2$.
\end{proposition}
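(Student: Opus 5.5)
The plan is to derive both moment formulas by taking expectations in It\^o's formula \eqref{eq:app-ito-general}, using the exact representation \eqref{eq:app-mild-solution} for the first moment. The argument has three steps.

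\emph{First moment.} Take expectations in \eqref{eq:app-mild-solution}. The only random term beyond the deterministic part is the stochastic integral $\sigma_0\int_{t_0}^t e^{-\kappa(t-s)}\sqrt{Q_s}\dif W_s$. Under the square-integrability assumption $\int_{t_0}^t\E[Q_s]\dif s<\infty$, this integral is a true martingale (not merely a local martingale) in its upper limit, so it has zero mean. That gives \eqref{eq:app-m1} at once. Equivalently, $m_1$ solves the linear ODE $m_1'=\kappa(\theta(t)-m_1)$ with $m_1(t_0)=q$, and variation of constants returns the same formula.

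\emph{Second moment.} Apply \eqref{eq:app-ito-general} with $f(t,q)=e^{2\kappa t}q^2$, so $f_t=2\kappa e^{2\kappa t}q^2$, $f_q=2e^{2\kappa t}q$ and $f_{qq}=2e^{2\kappa t}$. The drift then becomes
\[
e^{2\kappa t}\big[2\kappa q^2+2\kappa(\theta(t)-q)q+\sigma_0^2 q\big]=e^{2\kappa t}\big(2\kappa\theta(t)+\sigma_0^2\big)q .
\]
The quadratic terms cancel, so the drift is linear in $Q_t$. The martingale part is $2\sigma_0e^{2\kappa t}Q_t^{3/2}\dif W_t$.

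Integrate from $t_0$ to $t$, take expectations, and use the first-moment result inside the $\dif t$ integral. Multiplying by $e^{-2\kappa t}$ yields \eqref{eq:app-m2}. The variance identity is then the definition $\operatorname{Var}(Q_t)=m_2(t)-m_1(t)^2$.

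\emph{Main obstacle: justifying zero expectation of the stochastic integrals.} Two issues arise.
\begin{itemize}
\item Because $f(q)=q^2$ is smooth on all of $[0,\infty)$ and $\sqrt{q}$ is continuous at $0$, It\^o's formula holds up to and through any boundary contact. It is not only valid "before a possible boundary contact", so no Feller condition is needed.
\item For the martingale property, localize with $\tau_n=\inf\{s:Q_s\ge n\}$. Up to $\tau_n$ the integrands are bounded, so the stopped integrals have mean zero. Then pass $n\to\infty$.
\end{itemize}

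For that passage to the limit I would first bound the moments uniformly. Grönwall's inequality applied to the stopped versions of the two moment identities gives $\sup_{s\le T}\E[Q_{s\wedge\tau_n}^k]\le C$ for $k=1,2$, independently of $n$. Here I use that $\theta$ is bounded and that the drift is linear. Fatou's lemma then gives finite moments of $Q$ itself.

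Finally, $\E\int Q_s\dif s<\infty$ and $\E\int Q_s^3\dif s<\infty$ make the two integrals true martingales. The cubic bound comes from the analogous argument with $q^3$, or it is absorbed into the "usual square-integrability conditions" in the statement. Dominated or monotone convergence then lets $n\to\infty$ in the stopped identities.
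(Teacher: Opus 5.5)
Your proposal is correct and follows essentially the paper's route. For $m_1$ you take expectations in the variation-of-constants representation; for $m_2$ you apply It\^o's formula to the square, folding the factor $e^{2\kappa t}$ into $f$, whereas the paper applies It\^o to $q^2$, derives $m_2'+2\kappa m_2=(2\kappa\theta+\sigma_0^2)m_1$ and then multiplies by $e^{2\kappa t}$. Your localization with $\tau_n$, the uniform moment bounds, and the remark that It\^o's formula for $q^2$ remains valid through boundary contact add rigor that the paper leaves to the ``usual square-integrability conditions.''
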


\begin{proof}
Taking expectations in \eqref{eq:app-mild-solution} removes the martingale term and
gives \eqref{eq:app-m1}. For the second moment, apply It\^o's formula to
$f(q)=q^2$:
\begin{equation}
 \dif Q_t^2=\Big[(2\kappa\theta(t)+\sigma_0^2)Q_t
 -2\kappa Q_t^2\Big]\dif t
 +2\sigma_0Q_t^{3/2}\dif W_t.
 \label{eq:app-q2}
\end{equation}
Taking expectations produces the linear equation
$m_2'+2\kappa m_2=(2\kappa\theta+\sigma_0^2)m_1$. Multiplication by the
integrating factor $e^{2\kappa t}$ gives \eqref{eq:app-m2}.
\end{proof}

\subsection{Constant-target closed form and the seasonal distinction}
If $\theta(t)\equiv\bar\theta$ is constant and $\Delta=t-t_0$, then the first
two conditional moments reduce to
\begin{align}
 \E[Q_t\mid Q_{t_0}=q]
 &=\bar\theta+(q-\bar\theta)e^{-\kappa\Delta},
 \label{eq:app-cir-mean-constant}\\
 \operatorname{Var}(Q_t\mid q)
 &=\frac{q\sigma_0^2}{\kappa}e^{-\kappa\Delta}
   (1-e^{-\kappa\Delta})
 +\frac{\bar\theta\sigma_0^2}{2\kappa}
   (1-e^{-\kappa\Delta})^2.
 \label{eq:app-cir-var-constant}
\end{align}
Moreover,
\begin{equation}
 Q_t\mid Q_{t_0}=q\ \overset{d}{=}\
 c_\Delta\,\chi'^2_{\nu}(\lambda_\Delta),
 \label{eq:app-cir-transition}
\end{equation}
where
\begin{equation}
 c_\Delta=\frac{\sigma_0^2(1-e^{-\kappa\Delta})}{4\kappa},\qquad
 \nu=\frac{4\kappa\bar\theta}{\sigma_0^2},\qquad
 \lambda_\Delta=\frac{4\kappa e^{-\kappa\Delta}q}
 {\sigma_0^2(1-e^{-\kappa\Delta})}.
 \label{eq:app-cir-transition-par}
\end{equation}
Thus the constant-target model admits exact transition sampling. The seasonal model
used in this article deliberately retains $\theta(t)$, so equations
\eqref{eq:app-mild-solution}--\eqref{eq:app-m2}, rather than
\eqref{eq:app-cir-transition}, are the exact identities that remain valid without
freezing seasonality.

\subsection{Boundary diagnostic from It\^o's formula}
For the stopped process before $Q_t$ reaches a small level $\varepsilon>0$, apply
It\^o's formula to $\log Q_t$:
\begin{equation}
 \dif\log Q_t=\left[
 \frac{\kappa\theta(t)-\tfrac12\sigma_0^2}{Q_t}-\kappa
 \right]\dif t
 +\frac{\sigma_0}{\sqrt{Q_t}}\dif W_t.
 \label{eq:app-logito}
\end{equation}
The singular drift near zero changes sign according to
$2\kappa\theta(t)-\sigma_0^2$. For a constant target, the classical Feller
classification therefore makes zero inaccessible when
$2\kappa\bar\theta\ge\sigma_0^2$. For a seasonal target, the uniform condition
\begin{equation}
 2\kappa\inf_{t\in[0,T]}\theta(t)\ge\sigma_0^2
 \label{eq:app-uniform-feller}
\end{equation}
is a convenient sufficient criterion for strict interior positivity. The fitted
Manantali target violates this uniform inequality in the dry-season minimum, which
is why the numerical HJB treatment retains the $q=0$ boundary instead of assuming
it away. This observation does not imply negative inflow: at $q=0$ the diffusion
coefficient vanishes and the drift $\kappa\theta(t)$ points into the nonnegative
state space.

\section{Dynamic programming, HJB derivation, and optimal release}
\label{app:hjb}
This appendix supplies the control derivation behind
Section~\ref{sec:hjb}. Let $\mathcal F_t$ be the filtration generated by the inflow
noise and let an admissible control $a_t=(u_t,w_t)$ be progressively measurable,
with $0\le u_t\le U_{\max}$, $w_t\ge0$, and with the induced storage path satisfying
$0\le S_t\le S_{\max}$. At $S_t=0$ the viability condition requires
$Q_t-u_t-w_t\ge0$; at $S_t=S_{\max}$ it requires $Q_t-u_t-w_t\le0$ whenever the
boundary is active. Spill provides the mechanism that maintains the upper bound.

\subsection{Dynamic programming principle and HJB equation}
For a short interval $h>0$, the dynamic programming principle gives
\begin{multline}
 V(t,s,q)=\inf_{a}\E_{t,s,q}\Bigg[
 \int_t^{t+h}e^{-\rho(r-t)}\ell(r,S_r,Q_r,a_r)\dif r +e^{-\rho h}V(t+h,S_{t+h},Q_{t+h})\Bigg].
 \label{eq:app-dpp}
\end{multline}
For a smooth test function $V\in C^{1,1,2}$, It\^o's formula for the discounted
process $e^{-\rho(r-t)}V(r,S_r,Q_r)$ gives
\begin{align}
 \dif\big(e^{-\rho(r-t)}V\big)
 =e^{-\rho(r-t)}\Big[&V_t+(Q_r-u_r-w_r)V_s
 +\kappa(\theta(r)-Q_r)V_q \\
 &+\tfrac12\sigma_0^2Q_rV_{qq}-\rho V\Big]\dif r
 +\dif M_r,
 \label{eq:app-discounted-ito}
\end{align}
where $M_r$ is a local martingale. Taking conditional expectations in
\eqref{eq:app-dpp}, dividing by $h$ and sending $h\downarrow0$ yields
\begin{equation}
 0=V_t+\min_{(u,w)\in\mathcal U(s,q)}\left\{
 \ell+(q-u-w)V_s+\kappa(\theta(t)-q)V_q
 +\tfrac12\sigma_0^2qV_{qq}-\rho V\right\},
 \label{eq:app-hjb-derived}
\end{equation}
which is equivalent to \eqref{eq:hjb}. When classical smoothness fails at storage
or inflow boundaries, the same dynamic-programming equation is interpreted in the
state-constrained viscosity sense \cite{fleming2006,crandall1992,soner1986}.

\begin{proposition}[Verification principle]\label{prop:verification}
Suppose a sufficiently regular function $V$ satisfies
\eqref{eq:app-hjb-derived}, the terminal condition $V(T,s,q)=g(s)$ and the relevant
state constraints. Then $V(t,s,q)\le J^a(t,s,q)$ for every admissible control $a$.
If an admissible measurable selector $a^*$ attains the Hamiltonian minimum almost
everywhere, then $V=J^{a^*}$ and $a^*$ is optimal.
\end{proposition}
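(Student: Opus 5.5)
The plan is the standard verification argument: apply It\^o's formula to the discounted candidate value along an arbitrary admissible trajectory, use the HJB inequality to bound the drift from below by $-e^{-\rho(r-t)}\ell$, and take expectations.

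Fix $(t,s,q)$ and an admissible control $a=(u,w)$ with induced state $(S_r,Q_r)$ on $[t,T]$. By \eqref{eq:app-discounted-ito},
\[
e^{-\rho(T-t)}V(T,S_T,Q_T)-V(t,s,q)=\int_t^T e^{-\rho(r-t)}\Big[V_t+(Q_r-u_r-w_r)V_s+\kappa(\theta(r)-Q_r)V_q+\tfrac12\sigma_0^2Q_rV_{qq}-\rho V\Big]\dif r+M_T-M_t .
\]
Since $(u_r,w_r)\in\mathcal U(S_r,Q_r)$ by admissibility and the state constraints, \eqref{eq:app-hjb-derived} gives that the bracket is at least $-\ell(r,S_r,Q_r,u_r,w_r)$. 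Rearranging and using $V(T,\cdot)=g$, we get
\[
V(t,s,q)\le\int_t^T e^{-\rho(r-t)}\ell\,\dif r+e^{-\rho(T-t)}g(S_T)-(M_T-M_t).
\]

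\textbf{Main obstacle.} Expectations can be taken only if $M$ is a true martingale, or at least its stopped expectations vanish. The local martingale is $\int e^{-\rho(r-t)}\sigma_0\sqrt{Q_r}V_q\dif W_r$. I would localize with stopping times $\tau_n=\inf\{r:Q_r\ge n\}\wedge T$, which is where ``sufficiently regular'' will be used. Writing the inequality at $\tau_n$ with $V(\tau_n,S_{\tau_n},Q_{\tau_n})$ in place of $g(S_T)$ and taking expectations gives
\[
V(t,s,q)\le\E\Big[\int_t^{\tau_n}e^{-\rho(r-t)}\ell\,\dif r+e^{-\rho(\tau_n-t)}V(\tau_n,S_{\tau_n},Q_{\tau_n})\Big].
\]
To pass $n\to\infty$, I will assume $V$ has polynomial growth in $q$ uniformly in $s$, which is legitimate since $S\in[0,S_{\max}]$ is compact. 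Combined with the finite moments of Proposition~\ref{prop:app-moments}, this yields uniform integrability. Assuming also $\ell$ bounded below or integrable, monotone or dominated convergence gives $V\le J^a(t,s,q)$.

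Near the degenerate boundary $q=0$, where the Feller condition fails, It\^o's formula for $V$ itself is fine provided $V\in C^{1,1,2}$ up to $q=0$. I will state this as part of the regularity hypothesis, since the diffusion coefficient $\sqrt{q}$ is continuous there.

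For optimality, let $a^*$ be an admissible measurable selector attaining the minimum in \eqref{eq:app-hjb-derived} for almost every $r$ along its own trajectory. Then every inequality above becomes an equality, and the same localization and limit give $V(t,s,q)=J^{a^*}(t,s,q)$. Combined with the first part, $J^{a^*}\le J^a$ for all admissible $a$, so $a^*$ is optimal.

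I will also note that existence of a strong solution of the closed-loop system under $a^*$ is part of the admissibility assumption rather than proved.
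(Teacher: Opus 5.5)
Your argument is correct and follows the same verification route as the paper. Both apply It\^o's formula to the discounted $V$ along an admissible path, use the HJB inequality $V_t+\mathcal L^aV-\rho V+\ell^a\ge0$, localize the martingale, take expectations with $V(T,\cdot)=g$, and obtain equality along the minimizing selector. Your version is only more explicit about the localization. One small caveat: Proposition~\ref{prop:app-moments} supplies only fixed-time moments of order at most two. To dominate $V(\tau_n,S_{\tau_n},Q_{\tau_n})$ you should either restrict to at most quadratic growth in $q$ and control $\E[\sup_r Q_r^2]$ by Doob's maximal inequality, or invoke higher-order CIR moment bounds for general polynomial growth.
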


\begin{proof}
For any admissible $a$, the HJB minimum implies
\begin{equation}
 V_t+\mathcal L^aV-\rho V+\ell^a\ge0,
 \label{eq:app-verification-ineq}
\end{equation}
where $\mathcal L^a$ contains the controlled storage drift and the inflow generator.
Integrate \eqref{eq:app-discounted-ito} from $t$ to $T$, localize the martingale if
necessary, take expectations, and use $V(T,S_T,Q_T)=g(S_T)$. Equation
\eqref{eq:app-verification-ineq} gives $J^a-V(t,s,q)\ge0$. If $a^*$ attains the
minimum, the inequality becomes equality along its state path, hence
$J^{a^*}=V$.
\end{proof}

\subsection{Full KKT derivation of the release threshold}
At a fixed state, let $\bar u(t,s,q)$ be the largest release consistent with turbine
and state feasibility. Terms independent of $u$ can be suppressed and the Hamiltonian
subproblem becomes
\begin{equation}
 \min_{0\le u\le\bar u}\ H(u),\qquad
 H(u)=\ell(t,s,q,u,w)-uV_s+\text{const}.
 \label{eq:app-hamiltonian-u}
\end{equation}
Introduce multipliers $\mu_-\ge0$ for $-u\le0$ and $\mu_+\ge0$ for
$u-\bar u\le0$. With $m(u)=-\ell_u$ and $\lambda_W=-V_s$, the KKT conditions are
\begin{align}
 -m(u)+\lambda_W-\mu_-+\mu_+&=0,
 \label{eq:app-kkt-stationarity}\\
 \mu_-u=0,\qquad \mu_+(u-\bar u)&=0,
 \label{eq:app-kkt-comp}\\
 0\le u\le\bar u,\qquad \mu_-,\mu_+&\ge0.
 \label{eq:app-kkt-feas}
\end{align}
If $0<u<\bar u$, complementary slackness gives $\mu_-=\mu_+=0$ and therefore
$m(u)=\lambda_W$. At $u=0$, stationarity requires $m(0)\le\lambda_W$; at
$u=\bar u$, it requires $m(\bar u)\ge\lambda_W$. These are precisely the three
cases stated in Proposition~\ref{prop:threshold}. Convexity of $\ell$ makes $H$
convex, so the KKT conditions are sufficient.

A useful analytic specialization is the convex thermal-shortfall cost
\begin{equation}
 \ell(u)=c_1x(u)+\tfrac12c_2x(u)^2+\ell_0,\qquad
 x(u)=\big(D(t)-\eta u\big)_+,
 \label{eq:app-quadratic-cost}
\end{equation}
with $c_1,c_2>0$ and conversion efficiency $\eta>0$. In the unsaturated region
$u<D(t)/\eta$,
\begin{equation}
 m(u)=\eta\left[c_1+c_2\{D(t)-\eta u\}\right].
 \label{eq:app-marginal-cost}
\end{equation}
The interior release satisfying $m(u)=\lambda_W$ is therefore
\begin{equation}
 u_{\mathrm{int}}(t,s,q)=
 \frac{D(t)}{\eta}
 -\frac{\lambda_W/\eta-c_1}{c_2\eta}.
 \label{eq:app-explicit-release}
\end{equation}
Consequently the optimal release in this specialization is the feasible projection
of \eqref{eq:app-explicit-release} onto
$[0,\min\{\bar u,D(t)/\eta\}]$, with the boundary cases selected by the marginal
conditions above. This formula makes the economics explicit: a larger water value
reduces current release, whereas a larger contemporaneous marginal thermal cost
increases it.

\section{SDDP cuts, dual sensitivity, and the discrete water value}
\label{app:sddp}
The central SDDP identity can be obtained directly from parametric linear-programming
duality. For a fixed stage $t$, scenario $k$, and incoming storage $s$, write the
successor recourse problem schematically as
\begin{equation}
 \Phi_{tk}(s)=\min_{x}\left\{c_{tk}^{\top}x
 +\gamma\widehat Q_{t+1}(s'):\
 s'-s-q_{tk}+u+w=0,\;x\in\mathcal X_{tk}\right\}.
 \label{eq:app-parametric-lp}
\end{equation}
Let $\pi_{tk}$ denote the dual multiplier of the displayed storage-balance equality,
using exactly the sign convention in \eqref{eq:app-parametric-lp}.

\begin{proposition}[Dual multiplier as a storage subgradient]\label{prop:app-dual}
Assume strong duality holds for \eqref{eq:app-parametric-lp} at a sampled state
$\bar s$. Then
\begin{equation}
 -\pi_{tk}\in\partial\Phi_{tk}(\bar s).
 \label{eq:app-dual-subgrad}
\end{equation}
For a probability-weighted stage value
$Q_t(s)=\sum_kp_k\Phi_{tk}(s)$, a valid subgradient is
\begin{equation}
 \beta_t=-\sum_kp_k\pi_{tk},
 \label{eq:app-beta}
\end{equation}
and the associated Benders cut is
\begin{equation}
 Q_t(s)\ge Q_t(\bar s)+\beta_t(s-\bar s)
 =\alpha_t+\beta_t s.
 \label{eq:app-benders-cut}
\end{equation}
Hence the positive marginal water value under this sign convention is
\begin{equation}
 \lambda_t^W=-\beta_t=\sum_kp_k\pi_{tk}.
 \label{eq:app-discrete-water}
\end{equation}
\end{proposition}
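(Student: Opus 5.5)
The natural route is to treat \eqref{eq:app-parametric-lp} as a parametric LP in which the incoming storage $s$ enters only through the right-hand side of the balance constraint, and to read the subgradient off the LP dual. First I will make the problem linear. Introduce an epigraph variable $\vartheta\ge a_{t+1,\ell}+b_{t+1,\ell}s'$ for every cut $\ell\in\mathcal C_{t+1}$, and replace $\gamma\widehat Q_{t+1}(s')$ by $\gamma\vartheta$. Write the balance as $s'+u+w=s+q_{tk}$, so that $s$ appears only as a right-hand side. Then $\Phi_{tk}(s)=\min\{\tilde c^\top y: A y = s+q_{tk},\ y\in\tilde{\mathcal X}_{tk}\}$. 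Here $\tilde{\mathcal X}_{tk}$ is polyhedral and collects the stage constraints, the bounds and the cut constraints. As the optimal value of an LP viewed as a function of its right-hand side, $\Phi_{tk}$ is convex on the set of $s$ for which it is finite. I take that set to contain $\bar s$, which is what the strong-duality hypothesis provides.

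Next I will apply Lagrangian duality to the balance constraint alone. Let $\pi_{tk}$ be the multiplier of the constraint written as $s'-s-q_{tk}+u+w=0$. Then
\[
\Phi_{tk}(s)\ \ge\ \min_{y\in\tilde{\mathcal X}_{tk}}\big\{\tilde c^\top y+\pi_{tk}(s'-s-q_{tk}+u+w)\big\}
\]
holds for every $s$, by weak duality. The inner minimum does not depend on $s$ except through the term $-\pi_{tk}s$. Strong duality at $\bar s$ makes this bound tight there, with an optimal multiplier $\pi_{tk}$. Subtracting the two expressions gives
\[
\Phi_{tk}(s)\ge\Phi_{tk}(\bar s)-\pi_{tk}(s-\bar s),
\]
which is exactly $-\pi_{tk}\in\partial\Phi_{tk}(\bar s)$.

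The main obstacle is the sign convention, so this is where I will be most careful. Because $s$ enters the constraint with coefficient $-1$, the Lagrangian term is $-\pi_{tk}s$, and that is what produces the minus sign in \eqref{eq:app-dual-subgrad}. I will state this explicitly rather than appeal to a generic sensitivity result. I will also remark that the objective can be unbounded below only if the dual is infeasible, which the strong-duality hypothesis excludes.

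The expectation step is linear. Multiply each inequality by $p_k\ge0$ and sum over $k$. This gives $Q_t(s)\ge Q_t(\bar s)+\beta_t(s-\bar s)$ with $\beta_t=-\sum_kp_k\pi_{tk}$, which proves \eqref{eq:app-beta} and \eqref{eq:app-benders-cut}. The cut's intercept is then $\alpha_t=Q_t(\bar s)-\beta_t\bar s$.

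Finally, I will interpret the result. By Proposition~\ref{prop:sddp}, the water value is the negative of the active cut slope. Hence $\lambda_t^W=-\beta_t=\sum_kp_k\pi_{tk}$, which is \eqref{eq:app-discrete-water}. When all the $\Phi_{tk}$ are differentiable at $\bar s$ (unique duals), this quantity equals $-Q_t'(\bar s)$, the reduction in expected cost per additional unit of stored energy.
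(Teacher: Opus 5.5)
Your proof is correct and follows the paper's argument. You relax only the storage balance, note that the dual function depends on $s$ only through the affine term $-\pi_{tk}s$, combine weak duality at an arbitrary $s$ with strong duality at $\bar s$, and sum with weights $p_k$; the epigraph linearization of $\widehat Q_{t+1}$ is a harmless extra step the paper leaves implicit.

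One small point: citing Proposition~\ref{prop:sddp} for the final identity is circular, because the paper justifies that proposition by this very appendix. Your direct reading of $-\beta_t$ as the reduction in expected future cost per unit of storage is what the paper uses, and it suffices on its own.
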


\begin{proof}
The Lagrangian term associated with the storage balance is
$\pi_{tk}(s'-s-q_{tk}+u+w)$. For the optimal multiplier
$\pi_{tk}^*$ at $\bar s$, the dual function evaluated at another parameter $s$
has the affine dependence
\begin{equation}
 d(\pi_{tk}^*;s)=d(\pi_{tk}^*;\bar s)
 -\pi_{tk}^*(s-\bar s).
 \end{equation}
Weak duality gives $\Phi_{tk}(s)\ge d(\pi_{tk}^*;s)$, while strong duality at
$\bar s$ gives $d(\pi_{tk}^*;\bar s)=\Phi_{tk}(\bar s)$. Therefore
\begin{equation}
 \Phi_{tk}(s)\ge\Phi_{tk}(\bar s)
 -\pi_{tk}^*(s-\bar s),
\end{equation}
which proves \eqref{eq:app-dual-subgrad}. Probability-weighted summation yields
\eqref{eq:app-beta}--\eqref{eq:app-benders-cut}. The water value is the reduction
in future cost per additional unit of storage, so it is the negative cut slope,
which gives \eqref{eq:app-discrete-water}. If several cuts are active, the stage
value is nondifferentiable and the admissible marginal values are the negatives of
the active subgradients. This is the discrete analogue of a nonsmooth HJB storage
derivative.
\end{proof}

The continuous and discrete constructions therefore estimate the same economic
object in different mathematical representations:
\begin{equation}
 \lambda_W^{\mathrm{HJB}}(t,s,q)=-V_s(t,s,q),\qquad
 \lambda_{t}^{\mathrm{SDDP}}=-\beta_t.
 \label{eq:app-bridge}
\end{equation}
Agreement is expected only after clocks, information sets, inflow laws, terminal
conditions and operating costs have been harmonized; equality of native stage
sequences is neither required nor implied.

\section{Haar storage projection bound}
\label{app:haar}
Let $I_j$ denote the $M=2^J$ equal cells of $[0,S_{\max}]$, each of length
$h=S_{\max}/M$, and let $P_Jf$ be the cell-average projection. For
$f\in H^1(0,S_{\max})$, the one-dimensional Poincar\'e inequality on each cell gives
\begin{equation}
 \|f-(f)_{I_j}\|_{L^2(I_j)}\le C h\|f'\|_{L^2(I_j)},
 \label{eq:app-poincare}
\end{equation}
where $(f)_{I_j}$ is the cell average and $C$ is independent of $j$ and $h$.
Squaring \eqref{eq:app-poincare} and summing over all cells yields
\begin{equation}
 \|f-P_Jf\|_{L^2(0,S_{\max})}^2
 \le C^2h^2\|f'\|_{L^2(0,S_{\max})}^2.
 \label{eq:app-haar-squared}
\end{equation}
Since $h=S_{\max}/M$, taking square roots gives
\begin{equation}
 \|f-P_Jf\|_{L^2(0,S_{\max})}
 \le C S_{\max}M^{-1}\|f'\|_{L^2(0,S_{\max})},
\end{equation}
which proves Theorem~\ref{thm:haar}. This estimate concerns only the storage
projection. It does not by itself establish first-order convergence of the complete
state-constrained HJB solver, whose error also contains time discretization, inflow
discretization, boundary treatment, interpolation and policy-iteration components.


\begin{thebibliography}{50}
\bibitem{labadie2004} J.~W. Labadie, ``Optimal operation of multireservoir
systems: State-of-the-art review,'' \emph{J. Water Resour. Plann. Manage.},
vol.~130, no.~2, pp.~93--111, 2004.

\bibitem{gjelsvik2010} A.~Gjelsvik, B.~Mo, and A.~Haugstad, ``Long- and
medium-term operations planning and stochastic modelling in hydro-dominated
power systems based on stochastic dual dynamic programming,'' in \emph{Handbook
of Power Systems I}. Berlin, Germany: Springer, 2010, pp.~33--55.

\bibitem{dequeiroz2016} A.~R. de Queiroz, ``Stochastic hydro-thermal
scheduling optimization: An overview,'' \emph{Renew. Sustain. Energy Rev.},
vol.~62, pp.~382--395, 2016, doi:10.1016/j.rser.2016.04.065.

\bibitem{heitsch2003} H.~Heitsch and W.~R\"omisch, ``Scenario reduction
algorithms in stochastic programming,'' \emph{Comput. Optim. Appl.}, vol.~24,
pp.~187--206, 2003.

\bibitem{kaut2007} M.~Kaut and S.~W. Wallace, ``Evaluation of scenario generation
methods for stochastic programming,'' \emph{Pacific J. Optim.}, vol.~3, no.~2,
pp.~257--271, 2007.

\bibitem{gneiting2007} T.~Gneiting and A.~E. Raftery, ``Strictly proper scoring
rules, prediction, and estimation,'' \emph{J. Amer. Stat. Assoc.}, vol.~102,
no.~477, pp.~359--378, 2007.

\bibitem{politis1994} D.~N. Politis and J.~P. Romano, ``The stationary
bootstrap,'' \emph{J. Amer. Stat. Assoc.}, vol.~89, no.~428, pp.~1303--1313,
1994.

\bibitem{labat2005} D.~Labat, ``Recent advances in wavelet analyses:
Part 1. A review of concepts,'' \emph{J. Hydrol.}, vol.~314, nos.~1--4,
pp.~275--288, 2005, doi:10.1016/j.jhydrol.2005.04.003.

\bibitem{sang2013} Y.-F. Sang, ``A review on the applications of
wavelet transform in hydrology time series analysis,'' \emph{Atmos. Res.},
vol.~122, pp.~8--15, 2013, doi:10.1016/j.atmosres.2012.11.003.

\bibitem{bodian2020} A.~Bodian, L.~Diop, G.~Panthou, H.~Dacosta,
A.~Deme, A.~Dezetter, P.~M. Ndiaye, I.~Diouf, and T.~Vischel, ``Recent trend
in hydroclimatic conditions in the Senegal River Basin,'' \emph{Water}, vol.~12,
no.~2, Art.~436, 2020, doi:10.3390/w12020436.

\bibitem{bruckmann2022} L.~Bruckmann, N.~Delbart, L.~Descroix,
and A.~Bodian, ``Recent hydrological evolutions of the Senegal River flood
(West Africa),'' \emph{Hydrol. Sci. J.}, vol.~67, no.~3, pp.~385--400, 2022,
doi:10.1080/02626667.2021.1998511.

\bibitem{raso2020} L.~Raso, J.-C. Bader, and S.~Weijs, ``Reservoir
operation optimized for hydropower production reduces conflict with traditional
water uses in the Senegal River,'' \emph{J. Water Resour. Plann. Manage.},
vol.~146, no.~4, Art.~05020003, 2020,
doi:10.1061/(ASCE)WR.1943-5452.0001076.

\bibitem{affognon2026wavelet}
S.~B. Affognon, B.~M. Ndiaye, P.~Mendy, and C.~M.~F. Kebe,
``From daily fluctuations to annual hydrological cycles: A wavelet-based analysis
of nonstationary seasonality in Senegal River hydropower inflows,''
\emph{arXiv preprint arXiv:2608.23470 [stat.AP]}, 2026.
doi:10.48550/arXiv.2608.23470.

\bibitem{cir1985} J.~C. Cox, J.~E. Ingersoll, and S.~A. Ross, ``A theory of the
term structure of interest rates,'' \emph{Econometrica}, vol.~53, no.~2,
pp.~385--407, 1985.

\bibitem{feller1951} W.~Feller, ``Two singular diffusion problems,'' \emph{Ann.
Math.}, vol.~54, no.~1, pp.~173--182, 1951.

\bibitem{alfonsi2005} A.~Alfonsi, ``On the discretization schemes for the CIR
(and Bessel squared) processes,'' \emph{Monte Carlo Methods Appl.}, vol.~11,
no.~4, pp.~355--384, 2005.

\bibitem{lord2010} R.~Lord, R.~Koekkoek, and D.~van Dijk, ``A
comparison of biased simulation schemes for stochastic volatility models,''
\emph{Quant. Finance}, vol.~10, no.~2, pp.~177--194, 2010,
doi:10.1080/14697680802392496.

\bibitem{pereira1991} M.~V.~F. Pereira and L.~M.~V.~G. Pinto, ``Multi-stage
stochastic optimization applied to energy planning,'' \emph{Math. Program.},
vol.~52, pp.~359--375, 1991.

\bibitem{philpott2008} A.~B. Philpott and Z.~Guan, ``On the convergence of
stochastic dual dynamic programming and related methods,'' \emph{Oper. Res.
Lett.}, vol.~36, no.~4, pp.~450--455, 2008.

\bibitem{shapiro2011} A.~Shapiro, ``Analysis of stochastic dual dynamic
programming method,'' \emph{Eur. J. Oper. Res.}, vol.~209, no.~1, pp.~63--72,
2011.

\bibitem{fullner2025} C.~F\"ullner and S.~Rebennack, ``Stochastic
dual dynamic programming and its variants: A review,'' \emph{SIAM Rev.},
vol.~67, no.~3, pp.~415--539, 2025, doi:10.1137/23M1575093.

\bibitem{philpottmatos2012} A.~B. Philpott and V.~L. de Matos,
``Dynamic sampling algorithms for multi-stage stochastic programs with risk
aversion,'' \emph{Eur. J. Oper. Res.}, vol.~218, no.~2, pp.~470--483, 2012,
doi:10.1016/j.ejor.2011.10.056.

\bibitem{dowson2021} O.~Dowson and L.~Kapelevich, ``SDDP.jl: A Julia
package for stochastic dual dynamic programming,'' \emph{INFORMS J. Comput.},
vol.~33, no.~1, pp.~27--33, 2021, doi:10.1287/ijoc.2020.0987.

\bibitem{fleming2006} W.~H. Fleming and H.~M. Soner, \emph{Controlled Markov
Processes and Viscosity Solutions}, 2nd~ed. New York, NY, USA: Springer, 2006.

\bibitem{kushner2001} H.~J. Kushner and P.~G. Dupuis, \emph{Numerical Methods for
Stochastic Control Problems in Continuous Time}, 2nd~ed. New York, NY, USA:
Springer, 2001.

\bibitem{crandall1992} M.~G. Crandall, H.~Ishii, and P.-L. Lions,
``User's guide to viscosity solutions of second order partial differential
equations,'' \emph{Bull. Amer. Math. Soc.}, vol.~27, no.~1, pp.~1--67, 1992,
doi:10.1090/S0273-0979-1992-00266-5.

\bibitem{soner1986} H.~M. Soner, ``Optimal control with state-space
constraint I,'' \emph{SIAM J. Control Optim.}, vol.~24, no.~3, pp.~552--561,
1986, doi:10.1137/0324032.

\bibitem{affognon2026inflow}
S.~B. Affognon, B.~M. Ndiaye, P.~Mendy, and C.~M.~F. Kebe,
``InFlow: entropic stochastic dual dynamic programming with HJB
cross-certification for seasonal energy storage,''
\emph{arXiv preprint arXiv:2609.11339 [math.OC]}, 2026.

\bibitem{beulertz2019} D.~Beulertz, S.~Charousset, D.~Most, S.~Giannelos, and
I.~Yueksel-Erguen, ``Development of a modular framework for future energy system
analysis,'' in \emph{Proc. 54th Int. Universities Power Engineering Conf.
(UPEC)}, 2019, pp.~1--6.

\bibitem{mallat2009} S.~Mallat, \emph{A Wavelet Tour of Signal Processing: The
Sparse Way}, 3rd~ed. Burlington, MA, USA: Academic Press, 2009.

\bibitem{daubechies1992} I.~Daubechies, \emph{Ten Lectures on Wavelets}.
Philadelphia, PA, USA: SIAM, 1992.

\bibitem{benders1962} J.~F. Benders, ``Partitioning procedures for solving
mixed-variables programming problems,'' \emph{Numer. Math.}, vol.~4,
pp.~238--252, 1962.

\bibitem{huangfu2018} Q.~Huangfu and J.~A.~J. Hall, ``Parallelizing the dual
revised simplex method,'' \emph{Math. Program. Comput.}, vol.~10, no.~1,
pp.~119--142, 2018.

\bibitem{efron1993} B.~Efron and R.~J. Tibshirani, \emph{An Introduction to the
Bootstrap}. New York, NY, USA: Chapman \& Hall, 1993.

\bibitem{applebaum2009} D.~Applebaum, \emph{L\'evy Processes and
Stochastic Calculus}, 2nd~ed. Cambridge, U.K.: Cambridge University Press,
2009.

\bibitem{katz2002} R.~W. Katz, M.~B. Parlange, and P.~Naveau, ``Statistics of
extremes in hydrology,'' \emph{Adv. Water Resour.}, vol.~25, nos.~8--12,
pp.~1287--1304, 2002.

\bibitem{loeffler2017} K.~L\"offler, K.~Hainsch, T.~Burandt,
P.-Y. Oei, C.~Kemfert, and C.~von Hirschhausen, ``Designing a model for the
global energy system---GENeSYS-MOD: An application of the open-source energy
modeling system (OSeMOSYS),'' \emph{Energies}, vol.~10, no.~10, Art.~1468,
2017, doi:10.3390/en10101468.

\bibitem{opentepes2022} A.~Ramos, E.~F. Alvarez, and S.~Lumbreras,
``OpenTEPES: Open-source transmission and generation expansion planning,''
\emph{SoftwareX}, vol.~18, Art.~101070, 2022,
doi:10.1016/j.softx.2022.101070.

\bibitem{loucks2017} D.~P. Loucks and E.~van Beek, \emph{Water Resource Systems
Planning and Management: An Introduction to Methods, Models, and Applications}.
Cham, Switzerland: Springer, 2017.

\end{thebibliography}
\end{document}